\newcommand{\be}{\begin{equation}}
	\newcommand{\ee}{\end{equation}}
\numberwithin{equation}{section}
\newcommand{\lv}{\lvert}
\newcommand{\rv}{\rvert}
\newcommand{\pa}{\partial}
\newcommand{\la}{\langle}
\newcommand{\ra}{\rangle}
\newcommand{\ov}{\overline}
\begin{document}

\title{A semi-discrete first-order low regularity exponential integrator for the ``good" Boussinesq equation without loss of regularity
}
\subtitle{}

\titlerunning{FIRST-ORDER LREI FOR GB}        

	\author{Hang Li  \and Chunmei Su*
}


\institute{C. Su (*Corresponding author) \at
	Yau Mathematical Sciences Center, Tsinghua University, Beijing, China \\
	Tel.: +123-45-678910\\
	Fax: +123-45-678910\\
	\email{sucm@tsinghua.edu.cn}           
	\and
	H. Li \at
	Yau Mathematical Sciences Center, Tsinghua University, Beijing, China
}


\maketitle

\begin{abstract}
In this paper, we propose a semi-discrete first-order low regularity exponential-type integrator (LREI) for the ``good" Boussinesq equation. It is shown that the method is convergent linearly in the space $H^r$ for solutions belonging to $H^{r+p(r)}$ where $0\le p(r)\le 1$ is non-increasing with respect to $r$, which means less additional derivatives might be needed when the numerical solution is measured in a more regular space. Particularly, the LREI presents the first-order accuracy in $H^{r}$ with no assumptions of additional derivatives when $r>5/2$. This is the first time to propose a low regularity method which achieves the optimal first-order accuracy without loss of regularity for the GB equation. The convergence is confirmed by extensive numerical experiments.
\keywords{``good" Boussinesq equation \and low regularity \and error estimate \and first-order integrator \and without loss of regularity}
\subclass{35Q35 \and 65M12 \and 65M15 \and 65M70}
\end{abstract}
\allowdisplaybreaks
\section{Introduction}
\label{intro}
We consider the following periodic boundary value problem of the ``good" Boussinesq (GB) equation:
\be\label{GB}
\left\{	\begin{aligned}
	&z_{tt}+z_{xxxx}-z_{xx}-(z^2)_{xx}=0, \quad x \in \ \mathbb{T}, \quad t>0,\\
	&z(0, x)=\phi_0(x), \quad z_{t}(0, x)=\psi_0(x),
\end{aligned}
\right.\ee
in a torus $\mathbb{T}=[-\pi,\pi]$, where $\phi_0(x)$ and $\psi_0(x)$ are given initial data. The GB equation was originally founded by Joseph
Boussinesq \cite{Boussinesq1872} to describe the propagation of dispersive shallow water waves. Furthermore, it was also extended by replacing the quadratic nonlinearity with a general function of $z$ to model small oscillations of nonlinear beams \cite{varlamov2001eigenfunction} or the two-way propagation of water waves in a channel. There have been many applications for the GB equation in physics \cite{johnson1997modern,kirby1996nonlinear} and oceanographic engineering \cite{tatlock2018assessment}.

Analytically, similar to the well-know Korteweg-de Vries (KdV) equation, the nonlinear Schr{\"o}dinger (NLS) equation, and other dispersive equations, the GB equation admits abundant soliton solutions, see \cite{el2003numerical,de1991pseudospectral,lambert1987soliton,manoranjan1984numerical,manoranjan1988soliton}. However, the GB equation has some special characteristics that make it different from the KdV or NLS equations, e.g., two solitons can merge into one soliton or develop into the so-called antisolitons \cite{manoranjan1988soliton}. For less smooth solutions, Kishimoto \cite{kishimoto2013sharp} gave a sharp locally well-posed result by using the fix-point theory together with low regularity bilinear estimates in Bourgain spaces, also known as the dispersive Sobolev spaces \cite{TTao}.
The main result in \cite{kishimoto2013sharp} is that for any $(\phi_0, \psi_0)\in H^{s}\times H^{s-2}$, $s\ge -1/2$, there exist a positive time $T(\|\psi_0\|_{H^{s}}, \|\psi_0\|_{H^{s-2}})>0$ and a unique solution of the GB equation in a certain Banach space of functions $X\subset C([0,T]; H^s\times H^{s-2})$, however, this equation is ill-posed when $s< -1/2$. We refer to \cite{farah2010periodic,farah2009local,kishimoto2013sharp,oh2013improved,wang2013well} for more detailed theoretical results of the GB equation.

Along the numerical part, a large variety of classical numerical schemes for approximating the time dynamics of the GB equation have been proposed and analyzed, including the pseudospectral methods \cite{cheng2015fourier,de1991pseudospectral}, finite difference methods \cite{bratsos2007second,ortega1990nonlinear}, the exponential integrators \cite{su2020deuflhard} and splitting methods \cite{zhang2017second}. However, as a result of the fourth-order spatial derivative in \eqref{GB}, these traditional schemes can not reach their ideal convergence rates when the solution is not smooth enough. For example, an explicit finite difference scheme was constructed in \cite{ortega1990nonlinear}, which strictly requires the boundedness of $\pa_x^6 z$ and $\pa_t^4 z$ and a time step restriction of $\Delta t=O(\Delta x^2)$, where $\Delta t$ and $\Delta x$ represent the time and space step, respectively. Unfortunately, the solutions involved in practical applications become rough due to the interference of noise. Thus it is necessary to find appropriate methods which can achieve the ideal convergence even for rough solutions. To this aim, some low regularity exponential integrators (LREIs) requiring low additional regularity have been established by introducing the concepts of \emph{twisted variable} $w(t):={\rm e}^{it\pa_x^2}u(t)$ and Duhamel's formula, see \cite{li2022lowregularity,ostermann2019two}. Compared to the classical methods, e.g., classical exponential integrators, these strategies give rise to some numerical schemes that still converge even when the solution is rough. Specifically, Ostermann \& Su \cite{ostermann2019two} gave a first-order and a second-order LREIs and obtain the linear and quadratic convergence in $H^r$ ($r>1/2$) by requiring one and three additional derivatives, respectively. This demand is weaker than that of the operator splitting method \cite{zhang2017second} and the spectral method \cite{cheng2015fourier}, the latter of which requires the boundedness of at least four additional temporal and spatial derivatives to attain the second-order convergence in time. Recently, the authors \cite{li2022lowregularity} proposed a new first-order and second-order LREIs, which converge with less additional derivatives required than those in \cite{ostermann2019two}. In particular, the second-order LREI in \cite{li2022lowregularity} converges quadratically with two additional derivatives required, which is weaker than that of \cite{ostermann2019two}.

In this article, we will introduce a newly developed LREI which has first-order accuracy in $H^r$ by requiring the boundedness of additional spatial derivatives at the order of $p(r)$, where $p(r)$ is non-increasing with respect to $r$, i.e.,
\[\|u^n-u(t_n)\|_r\lesssim \tau,\quad \mathrm{for}\quad u\in L^\infty(0, T; H^{r+p(r)}).\]
Particularly, $p(r)=0$ for all $r>5/2$, which means the method is convergent at the first order in $H^r$ with no additional regularity needed.
The first-order LREI is established by the following strategy:
\begin{enumerate}[ (i)]
	\item In the first step, we rewrite the GB equation as a first-order system
	$$\begin{pmatrix}
		z\\
		z_t
	\end{pmatrix}_t=\begin{pmatrix}
		0&\ \ 1\\
		-\pa_x^4+\pa_x^2& \ \ 0
	\end{pmatrix}\begin{pmatrix}
		z\\
		z_t
	\end{pmatrix}+\begin{pmatrix}
	0\\
	(z^2)_{xx}
\end{pmatrix}.$$
Then we diagonalize the above matrix in Fourier space and introduce a new complex variable $u(t)$ involving $z$ and $z_t$ so that the GB equation equivalents to a Schr\"odinger-type equation.

	\item In the second step, we extract the dominant term in the linear part of the equation involving $u(t)$ and introduce the so-called \emph{twisted variable} \[w(t)={\rm e}^{it\pa_x^2 }u(t).\]
An appropriate approximation is used to integrate the Duhamel's formula on the new variable $w$.

	\item Finally, we twist the variable back and obtain an approximation to $u$.  The integral for the nonlinear term is approximated so that the iteration can be efficiently calculated in physical space or Fourier space.
\end{enumerate}
\begin{remark}
	The method of twisted variable is firstly introduced by Ostermann and Schratz \cite{Ostermann2018} to design low-regularity numerical schemes for the nonlinear Schr\"odinger equation. Since then this technique has been extensively applied for the nonlinear Schr\"odinger equation \cite{Ostermann2019,li2021fully,Ostermann2021,wu2020first,ostermann2022fully}, KdV equation \cite{hofmanova2017exponential,ostermann2020lawson,wu2019optimal,wu2022embedded}, Klein-Gordon equation \cite{baumstark2018uniformly,wang2022symmetric} and other equations \cite{rousset2021general,schratz2021low}. Compared to classical numerical methods, this type of low regularity integrators can achieve the same  convergence when the solutions are less regular.
\end{remark}

Below we present our idea to design the new LERI briefly. The approach is based on the phase space analysis of the nonlinear dynamics. Specifically, we are devoted to finding a suitable approximation for the following time integral
\[\int_0^{\tau}{\rm e}^{-is(k^2+k_1^2+k_2^2)}ds, \quad {\rm with} \quad k_1+k_2=k.\]
The leading term $-2k^2$ is kept and integrated exactly in \cite{ostermann2019two}, i.e.,
\[\int_0^{\tau}{\rm e}^{-is(k^2+k_1^2+k_2^2)}ds=\int_0^{\tau}{\rm e}^{-2isk^2+2isk_1k_2}ds\approx \int_0^{\tau}{\rm e}^{-2isk^2}ds.\]
This finally leads to a first-order scheme with one additional order of regularity required \cite{ostermann2019two}. To weaken the constraint on regularity, the authors applied the identity $1=\frac{k_1+k_2}{k}$ together with the property
\begin{align}\label{idea2}
	k^2+k_1^2+k_2^2=2k_2^2+2kk_1=2k_1^2+2kk_2=2k^2-2k_1k_2,
\end{align}
and decompose the integral as
\begin{align*}
\int_0^{\tau}&{\rm e}^{-is(k^2+k_1^2+k_2^2)}ds=\frac{k_1}{k}\int_0^{\tau}{\rm e}^{-is(k^2+k_1^2+k_2^2)}ds+\frac{k_2}{k}\int_0^{\tau}{\rm e}^{-is(k^2+k_1^2+k_2^2)}ds\\
&= \frac{k_1}{k}\int_0^{\tau}{\rm e}^{-2is(k_2^2+kk_1)}ds+\frac{k_2}{k}\int_0^{\tau}{\rm e}^{-2is(k_1^2+kk_2)}ds\\
&\approx \frac{k_1}{k}\int_0^{\tau}\left({\rm e}^{-2isk_2^2}+{\rm e}^{-2iskk_1}-1\right)ds+\frac{k_2}{k}\int_0^{\tau}\left({\rm e}^{-2isk_1^2}+{\rm e}^{-2iskk_2}-1\right)ds,\end{align*}
where the the integrals in the last line can be integrated exactly in phase space.
In this work, we apply the identity
\begin{align}\label{idea1}
	\frac{k_1^2+k_2^2+2k_1k_2}{k^2}=1
\end{align}
instead and the decomposition follows as
\begin{align}
{\rm e}^{-is(k^2+k_1^2+k_2^2)}&=\frac{k_1^2}{k^2}{\rm e}^{-2is(k_2^2+kk_1)}+\frac{k_2^2}{k^2}{\rm e}^{-2is(k_1^2+kk_2)}+\frac{2k_1k_2}{k^2}{\rm e}^{-2is(k^2-k_1k_2)}\notag\\
&\approx\frac{k_1^2}{k^2}({\rm e}^{-2isk_2^2}+{\rm e}^{-2iskk_1}-1)+\frac{k_2^2}{k^2}({\rm e}^{-2isk_1^2}+{\rm e}^{-2iskk_2}-1)\notag\\
	&\quad+\frac{2k_1k_2}{k^2}({\rm e}^{-2isk^2}+{\rm e}^{2isk_1k_2}-1),\label{idea2}
\end{align}
where all three terms in the approximation can be exactly integrated. In this way we are able to establish the numerical flow as follows
\be\label{znscheme}
z^n=\frac{1}{2}(u^n+\overline{u^n})+at_n+b,\quad
	z^n_t=\frac{i}{2}\langle\partial_x^2\rangle(u^n-\overline{u^n})+a,
\ee
where
\begin{align}\label{initial}
	a=\mathcal{F}_0(z_{t}(0, \cdot))=\frac{1}{2\pi}\int_{\mathbb T} \psi_0(x)dx, \quad 	b=\mathcal{F}_0(z_{}(0, \cdot))=\frac{1}{2\pi}\int_{\mathbb T} \phi_0(x)dx,
\end{align}
and
\be\label{firstsch}
u^{n+1}=\Psi_1^{\tau}(u^{n}),\quad n\ge 0, \quad u^0=u(0, x)=\phi(x)-b-i\la \pa_x^2 \ra^{-1}(\psi(x)-a),
\ee
with $\la\pa_x^2 \ra^{-1}$ defined in Section 2 and
\begin{align}\label{sch0}
	&\Psi_1^{\tau}(f)={\rm e}^{i\tau \langle \partial_x^2 \rangle}f-\frac{i}{4}B^{\tau}\bigg[-i\pa_x^{-2}\left[\left({\rm e}^{2i\tau \pa_x^2}\pa_x^{-2}\ov{f}\right)\left(\pa_x^2\ov{f}\right)\right]+i\pa_x^{-2}\left[\left(\pa_x^{-2}\ov{f}\right)^2\right]\notag\\
	&-i{\rm e}^{i\tau \pa_x^2}\pa_x^{-3}\left[\left({\rm e}^{i\tau \pa_x^2}\pa_x^{}\ov{f}\right)\left({\rm e}^{-i\tau \pa_x^2}\pa_x^{}\ov{f}\right)\right]+i\pa_x^{-3}\left[\left(\pa_x^{}\ov{f}\right)\ov{f}\right]-2\tau\pa_x^{-2}\left[\left(\pa_x^{2}\ov{f}\right)\ov{f}\right]\notag\\
	&-i\pa_x^{-4}\left({\rm e}^{2i\tau \pa_x^2}-1\right)\left(\pa_x\ov{f}\right)^2+i\pa_x^{-2}{\rm e}^{-i\tau \pa_x^2}\left({\rm e}^{i\tau \pa_x^2}\ov{f}\right)^2-i\pa_x^{-2}\left(\ov{f}\right)^2\notag\\
	&-2\tau\pa_x^{-2}\left(\pa_x\ov{f}\right)^2+\frac{i}{2}\Big[(\partial_x^{-1}f)^2-{\rm e}^{i\tau \partial_x^2}({\rm e}^{-i\tau \partial_x^2}\partial_x^{-1}f)^2\Big]\notag\\
	&-i{\rm e}^{i\tau \partial_x^2}\partial_x^{-1}\Big[({\rm e}^{-i\tau \partial_x^2}f)({\rm e}^{i\tau \partial_x^2}\partial_x^{-1}\overline{f})\Big]+i\partial_x^{-1}\big[f(\partial_x^{-1}\overline{f})\big]\bigg]\notag\\
	&-i\tau (at_n+b)B^{\tau}\big(f +\psi_1(2i\tau \partial_x^2)\overline{f}\big),
\end{align}
with $\psi_1$ and $B^\tau$ given by \eqref{psi1} and \eqref{BL0} with \eqref{AB}, respectively. It can be easily seen that the scheme is explicit and easy to implement if one applies Fourier spectral method for spatial discretization.

Now we state the main theorem concerning the convergence of the above scheme \eqref{sch0}. Before that, we define a function $p(r)$:
\[p(r)=
\left\{
\begin{aligned}
&1,\quad &r=1;\\
&(3-2r)+,\quad &1<r\le7/6;\\
&2/3,\quad &7/6<r\le17/12;\\
&(7/2-2r)+,\quad &17/12<r\le3/2;\\
&5/4-r/2,\quad &3/2<r< 5/2;\\
&0+,\quad &r=5/2;\\
&0,\quad &r\ge5/2,
\end{aligned}\right.\]
where $c+$ means $c+\varepsilon$ for any sufficiently small $\varepsilon>0$.

\begin{theorem}\label{1orderth1}
For $r\ge 1$, suppose that the exact solution of \eqref{GB} satisfies $z$ $\in$ $C(0,T; H^{r+p(r)})$ and $z_t$ $\in$ $C(0,T; H^{r+p(r)-2})$. Then there exists a constant $\tau_0>0$ such that for all step size $0<\tau\leq\tau_0$ and $t_n\leq T$, we have
	\begin{align*}
		\lVert z(t_n)-z^n\rVert_r+\lVert z_t(t_n)-z_t^n\rVert_{r-2}\leq C \tau^{},
	\end{align*}
	where $C>0$ depends on $T$, $\lVert z\rVert_{L^{\infty}(0,T; H^{r+p(r)})}$ and $\lVert z_t \rVert_{L^{\infty}(0,T;H^{r+p(r)-2})}$.
\end{theorem}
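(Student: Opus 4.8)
The plan is to run the standard Lady Windermere's fan argument for the twisted variable, estimating the one-step local error and the stability of the numerical flow $\Psi_1^\tau$ in $H^r$, and then propagating the error via a discrete Gronwall inequality. First I would pass from $(z,z_t)$ to the complex variable $u$ via the diagonalization described in step (i) of the introduction, so that proving the stated bound is equivalent to showing $\|u(t_n)-u^n\|_r\lesssim\tau$ for $u$ with the appropriate $H^{r+p(r)}$ regularity; the transformations relating $z,z_t$ to $u$ are bounded (up to the $\langle\partial_x^2\rangle^{-1}$ smoothing and the zero-mode corrections $a,b$), so no regularity is lost here. Then I would introduce the twisted variable $w(t)={\rm e}^{it\partial_x^2}u(t)$, write Duhamel's formula for $w$ over $[t_n,t_{n+1}]$, and compare it termwise with the numerical flow: the scheme \eqref{sch0} is by construction the exact integral of the approximation \eqref{idea2} of the oscillatory kernel ${\rm e}^{-is(k^2+k_1^2+k_2^2)}$, so the local error is governed by the difference between the true kernel and its three-term approximation.

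The heart of the matter is the \emph{local error estimate}: I would show $\|w(t_{n+1})-\Psi_1^\tau(w(t_n))\|_r\lesssim\tau^2$ (with constant depending on $\|u\|_{L^\infty H^{r+p(r)}}$), so that after summing $O(1/\tau)$ steps one gets global order $\tau$. For each of the three pieces in \eqref{idea2} one exploits the corresponding algebraic identity — e.g. $k^2+k_1^2+k_2^2=2(k_2^2+kk_1)$ — to see that the remainder after replacing, say, ${\rm e}^{-2is(k_2^2+kk_1)}$ by ${\rm e}^{-2isk_2^2}+{\rm e}^{-2iskk_1}-1$ is $({\rm e}^{-2isk_2^2}-1)({\rm e}^{-2iskk_1}-1)=O(s^2k_2^2\,|k||k_1|)$, and similarly for the mixed term. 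The prefactors $k_1^2/k^2$, $k_2^2/k^2$, $2k_1k_2/k^2$ and the various $\partial_x^{-j}$ in \eqref{sch0} are exactly what is needed to absorb these frequency weights; the bookkeeping of how many derivatives on $\bar f$ (or $f$) are consumed in each term, balanced against the available $\partial_x^{-j}$ and the bilinear structure, is precisely where the piecewise definition of $p(r)$ comes from. Concretely, I would use the algebra property of $H^r$ for $r>1/2$ together with Kato–Ponce / fractional Leibniz estimates to distribute derivatives between the two factors, optimizing the split to minimize the extra regularity demanded; the worst regimes (small $r$ close to $1$, and the plateau $7/6<r\le17/12$) dictate the branches of $p(r)$, and for $r>5/2$ the bilinear estimate closes with no extra derivatives, giving $p(r)=0$.

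The second ingredient is \emph{stability}: I would show $\Psi_1^\tau$ is Lipschitz in $H^r$ on balls, $\|\Psi_1^\tau(f)-\Psi_1^\tau(g)\|_r\le(1+C\tau)\|f-g\|_r$, which follows because ${\rm e}^{i\tau\langle\partial_x^2\rangle}$ is an isometry on $H^r$ and every remaining term carries a factor $\tau$ (through $B^\tau$ or the explicit $\tau$'s) and is a bounded bilinear map on $H^r$ — again by the algebra property and the smoothing $\partial_x^{-j}$, noting the differences are linear in $(f-g)$ after the usual $fg-\tilde f\tilde g=(f-\tilde f)g+\tilde f(g-\tilde g)$ splitting — provided the numerical iterates stay in a fixed ball, which one secures by an a priori bound/induction on $n$. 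Combining local error and stability, $e^{n+1}:=\|w(t_{n+1})-w^{n+1}\|_r$ satisfies $e^{n+1}\le(1+C\tau)e^n+C\tau^2$, and discrete Gronwall gives $e^n\le C\tau$ for $t_n\le T$; untwisting ($\|u^n-u(t_n)\|_r=\|w^n-w(t_n)\|_r$ since ${\rm e}^{-it_n\partial_x^2}$ is an $H^r$-isometry) and transforming back to $(z,z_t)$ yields the claimed estimate. The main obstacle I anticipate is not the Gronwall mechanics but the local error analysis in the low-regularity regime: carefully handling the $\partial_x^{-j}$ operators at low frequencies (the zero mode, where $\partial_x^{-1}$ is undefined — hence the separate treatment of $a,b$ and the mean-zero reductions), and finding, for each branch of $p(r)$, the right way to trade the $\tau^2 s^{-?}$-type oscillation gains against fractional derivative losses so that the bilinear estimate closes with exactly $p(r)$ extra derivatives; getting the plateau region and the endpoint $r=5/2$ sharp is the delicate part.
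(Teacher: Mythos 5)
Your plan coincides with the paper's own proof: the authors likewise reduce to the complex variable $u$, bound the one-step defect by estimating the phase-remainder terms $P_1^\tau$, $P_2^\tau$ (products of factors ${\rm e}^{-2isk_2^2}-1$, ${\rm e}^{-2iskk_1}-1$, etc.) in $H^r$ via Kato--Ponce, Hardy--Littlewood--Sobolev/Sobolev embedding and a special bilinear estimate, which is exactly where the piecewise $p(r)$ arises, then prove the Lipschitz stability bound $\lVert \Psi_1^{\tau}(f)-\Psi_1^{\tau}(g)\rVert_r\le(1+M\tau)\lVert f-g\rVert_r$ for $r\ge 1$ and conclude by the standard induction/Gronwall argument before transforming back to $(z,z_t)$. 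Your outline is correct and essentially identical in structure, differing only in that the delicate bilinear bookkeeping (which forces $r\ge1$ and the exact branches of $p(r)$) is left as a sketch rather than carried out.
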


\begin{figure}[htbp]
	\center
	\hspace{-110pt}
	\subfigure{
		\begin{minipage}[c]{0.5\linewidth}
			\centering
			\includegraphics[height=6cm, width=1.68\linewidth]{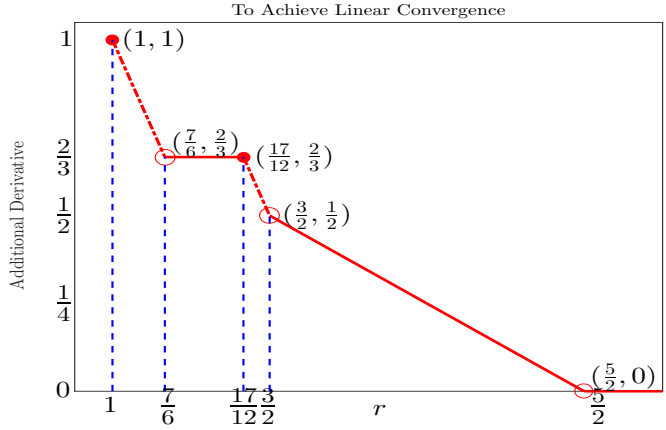}
		\end{minipage}
	}
	\caption{Additional order of regularity required to achieve the first-order convergence. Particularly, for the domain $(1, 7/6]$ and $(17/12, 3/2]$, we plot it by  a dash-dotted line or hollow points to mean that a plus sign exists in $p(r)$, e.g., the scheme is convergent linearly in $H^{3/2}$ for solutions in $H^{(3/2+1/2)+}$. } \label{addr}
\end{figure}

It is clear $p(r)$ represents the order of additional regularity required to
promise the first-order convergence of the numerical solution in $H^r$.
Fig. \ref{addr} displays the plot of $p(r)$, from which we observe that $p(r)$ is non-increasing. Particularly, $p(r)\equiv 0$ when $r>5/2$, which means the scheme is convergent linearly in $H^r$ without loss of regularity when $r>5/2$.

Compared to the convergence results of the scheme in \cite{ostermann2019two}, which converges in $H^r$ ($r>1/2$) at the first order when the solution belongs to $H^{r+1}$, and the method in \cite{li2022lowregularity}, which achieves the first-order convergence in $H^r$ for $r>7/6$ as the solution lies in $H^{r+2/3}$, it is obvious that our newly proposed scheme \eqref{sch0} requires less regularity to attain the ideal first-order convergence. Furthermore, for the convergence without smoothness assumptions, i.e., $p(r)=0$, compared to the first-order LREIs proposed in \cite{li2022lowregularity} and \cite{ostermann2019two} which converge at the order of $1/2$ and $\frac{r-1/2}{3r+1/2}-$, respectively, our newly developed first-order LREI presents a linear convergence without additional regularity assumptions when $r>5/2$. This is the first time to establish the optimal linear convergence without loss of regularity for the GB equation. On the other hand, we have to admit that the deficiency is that we have to impose $r\ge 1$ due to the stability analysis (cf. Section 4). Thus the analysis in $H^r$ for $r\le 1$ is absent at the moment.

 The rest of the paper is organized as follows. In Section \ref{sec1}, we present some notions and powerful technical tools. The first-order LREI is constructed in Section \ref{3}. Section \ref{4} is devoted to establishing the error estimate of the scheme. Some numerical results are presented to confirm the theoretical analysis in Section \ref{num} and conclusions are drawn in Section \ref{conclu}.

\section{Preliminary}\label{sec1}
In this section, we introduce some notations and present some useful technical lemmas which are of vital importance to design the method or to establish the error estimates.

\subsection{Notations}
In this paper, we use the notation $X\lesssim Y$ to denote that there exists a constant $C>0$ which may be different from line to line but is independent of the time step $\tau$ such that $\lv X\rv\le C Y$. The Fourier transform of a function $f$ on a torus $\mathbb{T}$ is defined by the coordinate representation $\{\widehat{f}_k \}_{k=-\infty}^{+\infty}$ under the basis $\{{\rm e}^{ikx}\}_{k=-\infty}^{+\infty}$ in $L^2(\mathbb{T})$, where
\[\mathcal{F}_k(f)=\widehat{f}_k=\frac{1}{2\pi}\int_{\mathbb{T}}f(x){\rm e}^{-ikx}dx.\]
Thus $f(x)=\sum\limits_{k\in \mathbb{Z}}\widehat{f}_k {\rm e}^{ikx}$ is the inverse Fourier transform. The norm and inner product in $L^2$ are defined respectively by
\be\label{L2}
\|f\|:=\|f\|_{L^2}=\big(\sum\limits_{k\in \mathbb{Z}}\lvert \widehat{f}_k\rvert^2\big)^{1/2},\quad ( f, g) =\sum\limits_{k\in\mathbb{Z}}\widehat{f}_k\overline{\widehat{g}_k}=\frac{1}
{2\pi}\int_\mathbb{T} f(x)\overline{g(x)}dx.\ee
Moreover, we define several operators given in Fourier space as
\be\label{def1}
\begin{split}
&\partial_x^{-1}f=\sum\limits_{k\neq 0}\frac{1}{ik}
\widehat{f}_k {\rm e}^{ikx},\quad \widetilde{f}=\sum\limits_{k\in\mathbb{Z}}\big\rvert\widehat{f}_{k}
\big\lvert{\rm e}^{ikx},\\
&\lvert\partial_x\rvert^\alpha f=\sum\limits_{k\neq 0}\lvert k\rvert^\alpha\widehat{f}_k {\rm e}^{ikx},\quad
J^\alpha f=\sum\limits_{k\in\mathbb{Z}}(1+k^2)^{\alpha/2}\widehat{f}_k {\rm e}^{ikx},\quad \alpha\in\mathbb{R}.
\end{split}
\ee
Similarly, we define $\langle \partial_x^2 \rangle:=\sqrt{-\partial_{x}^{2}+\partial_{x}^{4}}$ and its inverse by
\[\langle \partial_x^2 \rangle f=\sum\limits_{k\in \mathbb{Z}}\sqrt{k^2+k^4}
\widehat{f}_k {\rm e}^{ikx},\quad
\langle \partial_x^2 \rangle ^{-1}f=\sum\limits_{k\neq 0}\frac{1}{\sqrt{k^2+k^4}}\widehat{f}_k {\rm e}^{ikx}.\]

Furthermore, we introduce the Sobolev space
$H^\alpha$ with $\alpha\in \mathbb{R}$, which consists of the functions
$f=\sum\limits_{k\in \mathbb{Z}}\widehat{f}_k {\rm e}^{ikx}$ such that
$\|f\|_\alpha=\|J^\alpha f\|<\infty$, where
\begin{align*}
	\| f\|_{\alpha}^2=\|J^\alpha f\|^2=\sum\limits_{k\in \mathbb{Z}} (1+k^2)^\alpha \lvert \widehat{f}_k\rvert^2.
\end{align*}
It is clear that for $f$ with zero mean value, i.e., $\widehat{f}_k=0$, it holds $\| f\|_{\alpha}\lesssim \|\lvert\partial_x\rvert^\alpha f\|\lesssim \| f\|_{\alpha}$.
For $\alpha=0$, it is clear that the space reduces to $L^2$ and the corresponding norm is simply denoted as $\|\cdot\|$ which agrees with \eqref{L2}.

We say that $R=R(u,t,\tau,\xi)\in \mathcal{R}_{\theta}(\tau^{\nu})$ if and only if
\[
\|R(u,t,\tau,\xi)\|_r\le  C\tau^{\nu},\]
where $R(u,t,\tau,\xi)$ depends on the value $u(t+\xi)$, $0\le \xi\le\tau$, and $C$ relies on $\mathop{{\rm sup}}\limits_{0\leq s \leq \tau}	\left\lVert u(t+s)\right\rVert_{r+\theta}$. We write $f=g+\mathcal{R}_{\theta}(\tau^{\nu})$ whenever $f=g+R$ with $R\in \mathcal{R}_{\theta}(\tau^{\nu})$.

\subsection{Preliminary tools}
To begin with, we introduce the Kato-Ponce inequalities, which was previously proved in \cite{Kato1988CommutatorEA,bourgain2014endpoint,li2019kato} in the whole space $\mathbb{R}$ and extended to the periodic case by Li and Wu \cite{LiWu22} recently.
\begin{lemma} (The Kato-Ponce inequalities)\label{lemma1}
	(i) If $r$ $>$ $1/2$ and $f,g$ $\in$ $H^{r}$, then we have
	\be\label{kp}
	\|fg\|_r\lesssim 	\|f\|_{r} 	\|g\|_{r}.\ee
	\noindent(ii)  If $s$ $>$ $0$, $1<p\le \infty$, $1<p_1, p_3< \infty$, $1<p_2, p_4\le \infty$ satisfying $\frac{1}{p}=\frac{1}{p_1}+\frac{1}{p_2}$ and $\frac{1}{p}=\frac{1}{p_3}+\frac{1}{p_4}$, then we have the following inequality
	\be\label{kp1}
	\left\|J^s (fg)\right\|_{L^p}\lesssim \left\|J^sf\right\|_{L^{p_1}}\left\|g\right\|_{L^{p_2}}+\left\|J^sg\right\|_{L^{p_3}}\left\|f\right\|_{L^{p_4}}.\ee
\end{lemma}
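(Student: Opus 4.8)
The plan is to treat the two inequalities separately, since (i) is an elementary algebra estimate while (ii) is the genuine fractional Leibniz rule requiring harmonic-analytic machinery.

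For part (i), I would work entirely in Fourier series. Writing $\widehat{fg}_k=\sum_{k_1+k_2=k}\widehat f_{k_1}\widehat g_{k_2}$ and using the elementary separation inequality $\langle k\rangle^r\lesssim \langle k_1\rangle^r+\langle k_2\rangle^r$, valid whenever $k=k_1+k_2$ (with $\langle k\rangle=(1+k^2)^{1/2}$ and $|k|\le 2\max(|k_1|,|k_2|)$), I would bound
\[
\langle k\rangle^r|\widehat{fg}_k|\lesssim \sum_{k_1+k_2=k}\langle k_1\rangle^r|\widehat f_{k_1}|\,|\widehat g_{k_2}|+\sum_{k_1+k_2=k}|\widehat f_{k_1}|\,\langle k_2\rangle^r|\widehat g_{k_2}|.
\]
Each sum is the Fourier coefficient of a product of two functions with non-negative Fourier coefficients, so taking $\ell^2_k$ norms and applying H\"older gives $\|fg\|_r\lesssim \|J^r\widetilde f\|_{L^2}\|\widetilde g\|_{L^\infty}+\|\widetilde f\|_{L^\infty}\|J^r\widetilde g\|_{L^2}$, where $\widetilde h$ denotes the function whose Fourier coefficients are $|\widehat h_k|$ (so $\|J^\alpha\widetilde h\|_{L^2}=\|h\|_\alpha$, as already noted in the paper's notation). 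The one-dimensional Sobolev embedding $H^r(\mathbb T)\hookrightarrow L^\infty(\mathbb T)$ for $r>1/2$ then yields $\|\widetilde g\|_{L^\infty}\lesssim\|g\|_r$ and $\|\widetilde f\|_{L^\infty}\lesssim\|f\|_r$, closing the estimate $\|fg\|_r\lesssim\|f\|_r\|g\|_r$.

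For part (ii) I would invoke a Littlewood--Paley decomposition adapted to $\mathbb T$ together with Bony's paraproduct splitting $fg=T_fg+T_gf+R(f,g)$, where $T_gf=\sum_j(S_{j-2}g)\,\Delta_j f$ collects the low-frequency-$g$ / high-frequency-$f$ interactions and $R(f,g)$ collects the resonant comparable-frequency interactions. For the paraproduct $T_gf$ the operator $J^s$ essentially acts on the high-frequency factor of $f$, so after an almost-orthogonality argument and the Fefferman--Stein vector-valued maximal inequality I would obtain $\|J^sT_gf\|_{L^p}\lesssim\|J^sf\|_{L^{p_1}}\|g\|_{L^{p_2}}$; symmetrically $\|J^sT_fg\|_{L^p}\lesssim\|f\|_{L^{p_4}}\|J^sg\|_{L^{p_3}}$. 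The resonant term $R(f,g)$ is handled by summing over dyadic blocks of comparable frequency and distributing $J^s$ onto either factor, again via H\"older and the square-function bound, which is precisely where the strict inequalities $1<p_1,p_3<\infty$ enter, so that the square function acting on the differentiated factor is $L^p$-bounded. To pass from $\mathbb R$ to the periodic setting I would either transfer the underlying Coifman--Meyer bilinear multiplier bounds by a de Leeuw-type argument or rerun the discrete Littlewood--Paley theory directly on $\mathbb T$, taking care to separate the zero mode, on which $J^s$ reduces to the identity.

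The hard part will be the endpoint regime of part (ii) in which $p_2=\infty$ or $p_4=\infty$ is permitted: there the low-frequency factor lives only in $L^\infty$, where neither the maximal function nor the square function is bounded, so the naive paraproduct estimate fails and one must instead exploit cancellation in the high-high resonant sum (the mechanism behind the Bourgain--Li and Grafakos--Oh endpoint theorems, and their periodic counterpart in Li and Wu \cite{LiWu22}). Since the statement is quoted directly from \cite{LiWu22}, in the write-up I would most likely reduce (ii) to the cited theorem rather than reprove the endpoint case from scratch, and spend the effort instead on checking that the hypotheses on $(p,p_1,p_2,p_3,p_4)$ match, and that the periodic normalization of $J^s$ and the torus $[-\pi,\pi]$ used here are consistent with the reference.
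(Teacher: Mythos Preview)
Your plan is sound, but you should know that the paper does not prove this lemma at all: it simply states the inequalities and attributes them to Kato--Ponce \cite{Kato1988CommutatorEA}, Bourgain--Li \cite{bourgain2014endpoint}, and Li \cite{li2019kato} on $\mathbb{R}$, and to Li--Wu \cite{LiWu22} for the periodic extension. So your proposal goes well beyond what the paper itself does.

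Your argument for part (i) is a correct self-contained proof (the key point $\|\widetilde g\|_{L^\infty}\le\sum_k|\widehat g_k|\lesssim\|g\|_r$ via Cauchy--Schwarz indeed uses exactly $r>1/2$), and is arguably preferable to a bare citation since it is so short. For part (ii) your instinct to ultimately defer to \cite{LiWu22} rather than rerun the endpoint analysis is exactly what the paper does; the paraproduct outline you sketch is the standard route and correctly identifies where the restrictions $1<p_1,p_3<\infty$ and the delicacy of $p_2,p_4=\infty$ enter, but reproducing it in full would be disproportionate to its role here as a cited tool.
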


 Next we present Hardy-Littlewood-Sobolev type inequality and Sobolev embedding theorem on the torus $\mathbb{T}$, which provides a new approach for the subsequent estimate of local truncation errors. We refer to \cite{adams2003sobolev,ambrosio2015periodic,benyi,maz2002bourgain,stein2016introduction} and references therein.
\begin{lemma}\label{sobolevcompact}
		(i) (Hardy-Littlewood-Sobolev type inequality) Let $s\in[0,1/2)$. Then there exists a constant $C=C(s)>0$ such that
	\[\|f\|_{-s}\leq C\|f\|_{L^{\frac{2}{1+2s}}(\mathbb{T})},\]
	for any $f\in L^{\frac{2}{1+2s}}(\mathbb{T})$.
	
	\noindent (ii) (Sobolev embedding theorem) Let $s\in(0,1/2)$. The inclusion
	 \[H^s(\mathbb{T})\subseteq L^q(\mathbb{T})\]
	  is continuous for any $q\in\left[1, \frac{2}{1-2s}\right]$.
\end{lemma}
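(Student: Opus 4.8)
The plan is to treat both parts as two faces of a single mapping property of the Bessel potential $J^{-s}$. The two inequalities are in fact dual to one another under the $L^2(\mathbb{T})$ pairing $(\cdot,\cdot)$, since $\frac{2}{1+2s}$ and $\frac{2}{1-2s}$ are Hölder conjugates; I would exploit this to reduce the work to a single kernel estimate. Concretely, I would identify $J^{-s}$ with convolution against a periodic kernel $K_s$, establish the near-origin singularity of $K_s$, and then read both estimates off the classical one-dimensional Hardy–Littlewood–Sobolev (HLS) theorem for the fractional integral.

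Let $K_s$ be the distribution on $\mathbb{T}$ whose $k$-th Fourier coefficient equals $(1+k^2)^{-s/2}$, so that $J^{-s}f = K_s \ast f$ for every $f$. The heart of the argument is the kernel bound $|K_s(x)| \lesssim |x|^{s-1}$ for $0 < |x| \le \pi$, with $K_s$ bounded away from the origin. This places $K_s$ in the same class as the one-dimensional Riesz kernel $|x|^{s-1}$, for which HLS guarantees that $f \mapsto K_s \ast f$ maps $L^{p_1}(\mathbb{T})$ boundedly into $L^{p_2}(\mathbb{T})$ whenever $\frac{1}{p_2} = \frac{1}{p_1} - s$ with $1 < p_1 < p_2 < \infty$.

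Granting this, part (i) is immediate: since $\|f\|_{-s} = \|J^{-s}f\| = \|K_s \ast f\|_{L^2}$ and the exponents $p_1 = \frac{2}{1+2s}$, $p_2 = 2$ satisfy $\frac{1}{p_2} = \frac{1}{p_1} - s$, HLS yields $\|f\|_{-s} \lesssim \|f\|_{L^{2/(1+2s)}}$; the endpoint $s = 0$ is the trivial identity $\|f\|_0 = \|f\|_{L^2}$. For part (ii) I would write $f = J^{-s}(J^s f) = K_s \ast g$ with $g = J^s f$ and $\|g\| = \|f\|_s$; taking $p_1 = 2$, $p_2 = \frac{2}{1-2s}$ in HLS gives $\|f\|_{L^{2/(1-2s)}} \lesssim \|g\| = \|f\|_s$. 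Because $\mathbb{T}$ has finite measure, Hölder's inequality then furnishes the nesting $L^{2/(1-2s)}(\mathbb{T}) \hookrightarrow L^q(\mathbb{T})$ for every $q \le \frac{2}{1-2s}$, extending the embedding to the whole range $q \in [1, \frac{2}{1-2s}]$.

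The one genuinely technical point, and the step I expect to be the main obstacle, is the kernel estimate $|K_s(x)| \lesssim |x|^{s-1}$. I would obtain it either by Poisson summation, comparing $K_s$ with the whole-line Bessel kernel whose singularity $|x|^{s-1}$ at the origin is classical while the lattice-translate corrections are smooth and bounded on $[-\pi,\pi]$, or by direct summation of the Fourier series: split off the finitely many low modes (smooth and bounded), replace $(1+k^2)^{-s/2}$ by $|k|^{-s}$ at the cost of an error series of order $|k|^{-s-2}$ that is absolutely convergent and hence defines a bounded continuous function, and invoke the known near-origin asymptotics $\sum_{k\ge 1} k^{-s}\cos(kx) \sim C_s\,|x|^{s-1}$. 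Once this bound is secured, both inequalities follow from the standard HLS theorem together with the finite-measure nesting, and the duality noted above serves as an internal consistency check.
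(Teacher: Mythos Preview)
Your approach is correct and self-contained, but note that the paper does not actually prove this lemma: it is stated as a preliminary tool with the remark ``We refer to \cite{adams2003sobolev,ambrosio2015periodic,benyi,maz2002bourgain,stein2016introduction} and references therein,'' and no argument is supplied. So there is nothing to compare against at the level of proof strategy.

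That said, your route via the Bessel-potential kernel $K_s$ and the one-dimensional HLS inequality is the standard way these periodic estimates are established in the references the paper cites (in particular B\'enyi--Oh and Stein--Weiss), and the duality you point out between (i) and (ii) is exactly right. The kernel bound $|K_s(x)|\lesssim |x|^{s-1}$ is the only nontrivial ingredient; either of your two suggested derivations (Poisson summation against the whole-line Bessel kernel, or splitting off $|k|^{-s}$ and using the classical asymptotics of $\sum_{k\ge 1} k^{-s}\cos(kx)$) works cleanly for $s\in(0,1)$, so there is no gap. Your treatment of the endpoint $s=0$ in (i) as the trivial identity and the extension to all $q\le \tfrac{2}{1-2s}$ in (ii) via H\"older on the finite-measure torus are both fine.
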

\begin{lemma}\label{lemma2}
	(i) For all $x,y$ $\in$ $\mathbb{R}$ and $0\leq \theta\leq1$, we have
	\begin{align*}
		&\lvert {\rm e}^{ix}-1\rvert\leq2^{1-\theta}\rvert x\rvert^{\theta}, \quad \lvert {\rm e}^{ix}-1-ix\rvert \leq 2^{1-2\theta}\rvert x\rvert^{1+\theta}.
	\end{align*}
	\noindent(ii) For $t\in\mathbb{R}$, $r\ge 0$ and $f\in H^r$, we have
	\[\|\psi_1(it\pa_x^2)f\|_r\le \|f\|_r,\]
where
\begin{align}\label{psi1}
	\psi_1(y)=\int_0^1 {\rm e}^{ys}ds, \quad \mathrm{for}
	\quad y\in\mathbb{C}.
\end{align}

\end{lemma}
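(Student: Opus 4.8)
The plan is to prove both parts by elementary means: part (i) by multiplicative interpolation between two crude endpoint bounds, and part (ii) by reducing the operator to a Fourier multiplier whose symbol has modulus at most one.

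For part (i), I would first record the two endpoint bounds on which everything rests. The bound $\lvert {\rm e}^{ix}-1\rvert\le 2$ is immediate from $\lvert {\rm e}^{ix}\rvert=1$ and the triangle inequality, while $\lvert {\rm e}^{ix}-1\rvert\le \lvert x\rvert$ follows from writing ${\rm e}^{ix}-1=i\int_0^x {\rm e}^{is}\,ds$ and estimating the integrand by one. The first claimed inequality is then just
\[
\lvert {\rm e}^{ix}-1\rvert=\lvert {\rm e}^{ix}-1\rvert^{\theta}\,\lvert {\rm e}^{ix}-1\rvert^{1-\theta}\le \lvert x\rvert^{\theta}\,2^{1-\theta},
\]
valid for every $0\le\theta\le1$. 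For the second inequality I would differentiate to obtain $\frac{d}{dx}({\rm e}^{ix}-1-ix)=i({\rm e}^{ix}-1)$, so that ${\rm e}^{ix}-1-ix=i\int_0^x({\rm e}^{is}-1)\,ds$. Feeding in the two endpoint bounds above yields $\lvert {\rm e}^{ix}-1-ix\rvert\le\int_0^{\lvert x\rvert}s\,ds=\lvert x\rvert^2/2$ on one hand, and $\lvert {\rm e}^{ix}-1-ix\rvert\le\lvert {\rm e}^{ix}-1\rvert+\lvert x\rvert\le 2\lvert x\rvert$ on the other. Interpolating these with weights $\theta$ and $1-\theta$ gives
\[
\lvert {\rm e}^{ix}-1-ix\rvert\le\left(\lvert x\rvert^2/2\right)^{\theta}\left(2\lvert x\rvert\right)^{1-\theta}=2^{1-2\theta}\lvert x\rvert^{1+\theta},
\]
which is exactly the stated estimate.

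For part (ii), I would pass to Fourier space. Since $\partial_x^2$ acts on the mode ${\rm e}^{ikx}$ as multiplication by $-k^2$, the operator $\psi_1(it\partial_x^2)$ is the Fourier multiplier with symbol $\psi_1(-itk^2)=\int_0^1 {\rm e}^{-itk^2 s}\,ds$. Because the integrand has modulus one for each $s$, the triangle inequality gives $\lvert \psi_1(-itk^2)\rvert\le\int_0^1\lvert {\rm e}^{-itk^2 s}\rvert\,ds=1$ for every $k\in\mathbb{Z}$. Applying this mode by mode in the $H^r$ norm,
\[
\|\psi_1(it\partial_x^2)f\|_r^2=\sum_{k\in\mathbb{Z}}(1+k^2)^r\,\lvert \psi_1(-itk^2)\rvert^2\,\lvert \widehat{f}_k\rvert^2\le\sum_{k\in\mathbb{Z}}(1+k^2)^r\,\lvert \widehat{f}_k\rvert^2=\|f\|_r^2,
\]
which proves the contraction bound.

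These arguments are all elementary, so there is no substantial obstacle; the only point demanding care is the bookkeeping of constants in part (i). One must pair the correct endpoint bounds (the $\lvert x\rvert^2/2$ and $2\lvert x\rvert$ estimates for the second inequality) with the interpolation weights $\theta$ and $1-\theta$, so that the prefactors $2^{-\theta}$ and $2^{1-\theta}$ combine to precisely $2^{1-2\theta}$ and the powers of $\lvert x\rvert$ add to $1+\theta$. Checking the two extreme cases $\theta=0$ and $\theta=1$ against the claimed bounds is a convenient sanity check that the weights have been assigned correctly.
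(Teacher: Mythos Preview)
Your argument is correct in both parts: the interpolation between the two crude endpoint bounds in (i) is carried out cleanly with the right constants, and the multiplier bound in (ii) is immediate once you note $\lvert\psi_1(-itk^2)\rvert\le 1$. The paper does not actually prove this lemma but defers to \cite{ostermann2019two}, so your self-contained elementary proof is a welcome addition rather than a divergence from the paper's approach.
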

For the details of the proof, we refer to \cite{ostermann2019two}. Moreover, we illustrate a lemma which was introduced by \cite{li2022lowregularity,li2021fully}.

\begin{lemma}\label{lemma3}
	\noindent (i) For $f \in H^{r}$ with $r\ge 0$, $t\in \mathbb{R}$, it holds
	\be\label{l241}
	\lVert \langle \partial_x^2 \rangle^{-1} f\rVert_r\leq \lVert f \rVert_{r},    \ \lVert Af\rVert_r\leq \lVert f \rVert_{r},  \ \lVert Bf\rVert_r\leq \lVert f \rVert_{r}, \ \lVert ({\rm e}^{itA}-1)f\rVert_r\leq \lvert t\rvert  \lVert f \rVert_{r},\ee
	where $A$ and $B$ are given by
\begin{align}\label{AB}
	A:=\langle \partial_x^2 \rangle+\partial_{x}^2,\quad B:=\langle \partial_x^2 \rangle^{-1}\partial_{x}^2.
\end{align}
	
	\noindent (ii) For $r\ge 0$ and $0\leq\gamma\leq1$, $f\in H^{r+2\gamma}$, one has
	\be\label{l242}
	\lVert ({\rm e}^{it \partial_x^2}-1)f\rVert_r \lesssim \lvert t \rvert^{\gamma} \lVert  f \rVert_{r+2\gamma},\quad
	\lVert ({\rm e}^{it \langle\partial_x^2\rangle}-1)f\rVert_r \lesssim \lvert t \rvert^{\gamma} \lVert  f \rVert_{r+2\gamma}.
	\ee

	\noindent (iii) If $f,g$ $\in$ $H^{1}$, then it holds
	\be\label{l244}
	\big\| J^{-1} \left(g (J f)  \right)\big\|\lesssim {\rm min}\{\lVert f\rVert \lVert g \rVert_1, \lVert f\rVert_1 \lVert g \rVert\}.
	\ee
	\noindent (iv) If $f,g$ $\in$ $H^{r}$, $r>1/2$ then we have
	\be\label{l2444}
	\big\| J^{-1} \left(g (J f)  \right)\big\|_r\lesssim \|f\|_r \|g\|_r.
	\ee
\end{lemma}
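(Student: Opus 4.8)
The plan is to prove the four statements of Lemma~\ref{lemma3} by passing to Fourier space and exploiting the explicit symbols of the operators involved. For each operator the strategy is the same: compute its multiplier on the Fourier mode $e^{ikx}$, bound that multiplier pointwise in $k$, and then read off the corresponding $H^r$ (or $L^2$) bound from Parseval's identity, since $\|f\|_r^2=\sum_k(1+k^2)^r|\widehat f_k|^2$.

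For part (i), I would first record the symbols: $\langle\pa_x^2\rangle$ has symbol $\sqrt{k^2+k^4}=|k|\sqrt{1+k^2}$, the operator $A=\langle\pa_x^2\rangle+\pa_x^2$ has symbol $\sqrt{k^2+k^4}-k^2$, and $B=\langle\pa_x^2\rangle^{-1}\pa_x^2$ has symbol $-k^2/\sqrt{k^2+k^4}=-|k|/\sqrt{1+k^2}$ (all vanishing at $k=0$). The first inequality is immediate because $1/\sqrt{k^2+k^4}\le 1$ for all $k\neq 0$. For the $A$ bound the key elementary estimate is $0\le \sqrt{k^2+k^4}-k^2\le 1$: indeed $\sqrt{k^2+k^4}-k^2=k^2(\sqrt{1+k^{-2}}-1)\le k^2\cdot\frac{1}{2k^2}=\tfrac12$ for $|k|\ge1$, so the symbol of $A$ is bounded by $1$; the $B$ bound follows since $|k|/\sqrt{1+k^2}\le1$. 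For the last inequality I would use part~(i) of Lemma~\ref{lemma2}, namely $|e^{ix}-1|\le|x|$ (the case $\theta=1$), applied with $x=t\,a(k)$ where $a(k)$ is the symbol of $A$; since $|a(k)|\le1$ this gives $|e^{ita(k)}-1|\le|t|\,|a(k)|\le|t|$ uniformly in $k$, which is exactly the claimed multiplier bound.

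For part (ii) the operator $e^{it\pa_x^2}-1$ has symbol $e^{-itk^2}-1$. The interpolation-type bound $|e^{-itk^2}-1|\lesssim|t|^\gamma|k|^{2\gamma}$ for $0\le\gamma\le1$ is precisely the content of Lemma~\ref{lemma2}(i) with $\theta=\gamma$ and $x=-tk^2$, giving $|e^{-itk^2}-1|\le 2^{1-\gamma}|t|^\gamma|k|^{2\gamma}$. Summing against $(1+k^2)^r|\widehat f_k|^2$ then produces $\|(e^{it\pa_x^2}-1)f\|_r\lesssim|t|^\gamma\||\pa_x|^{2\gamma}f\|_r\lesssim|t|^\gamma\|f\|_{r+2\gamma}$; the identical argument with the symbol $\sqrt{k^2+k^4}$ in place of $k^2$, using $\sqrt{k^2+k^4}\lesssim k^2$ for $|k|\ge1$ and $\lesssim|k|$ for small $k$, handles the $\langle\pa_x^2\rangle$ version (here one uses $|k|^{2\gamma}\lesssim(1+k^2)^\gamma$ to absorb the low modes). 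The main subtlety in parts (i)--(ii) is simply keeping careful track of the $k=0$ mode: the inverse operators $\langle\pa_x^2\rangle^{-1}$ and $\pa_x^{-1}$ annihilate it, so all sums are over $k\neq0$ and the pointwise symbol bounds are uniform there.

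Parts (iii) and (iv) are genuine bilinear estimates and will be the main obstacle. For (iv) I would invoke the Kato--Ponce inequality in the form of Lemma~\ref{lemma1}(i): writing $h=Jf$, the quantity $J^{-1}(g\,(Jf))$ is controlled by distributing the derivative $J$ off the product, and since $\|g(Jf)\|\lesssim$ (product estimates) one obtains $\|J^{-1}(g(Jf))\|_r\lesssim\|g\|_r\|f\|_r$ after using $\|Jf\|_{r-1}=\|f\|_r$ and the algebra property \eqref{kp} valid for $r>1/2$; more carefully I would apply the commutator/fractional-Leibniz form \eqref{kp1} with an appropriate H\"older splitting so that the single extra derivative in $Jf$ is balanced against the $J^{-1}$ outside. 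For (iii), which asks for the sharper endpoint bound at $r=1$ with the minimum of two mixed-norm products, the plan is to split in Fourier space into the region where the output frequency is comparable to the frequency of $f$ (high--low interaction, controlled by $\|f\|\,\|g\|_1$) and the region where it is comparable to that of $g$ (low--high, controlled by $\|f\|_1\,\|g\|$); in each region one factor carries the derivative and the $J^{-1}$ cancels it, while the other factor is estimated in $L^2$ via $H^1\hookrightarrow L^\infty$ on the torus. Taking the minimum over the two symmetric ways of assigning the derivative yields \eqref{l244}. The delicate point throughout (iii)--(iv) is ensuring the frequency localizations are compatible with the operator $J^{-1}(\cdot\,(J\,\cdot))$ so that no derivative is lost, but since these are standard paraproduct estimates and the statement cites \cite{li2022lowregularity,li2021fully}, I would refer to those works for the routine Littlewood--Paley bookkeeping.
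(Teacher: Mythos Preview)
Your proposal is correct: parts (i) and (ii) are pure Fourier-multiplier bounds and your symbol computations (including the use of Lemma~\ref{lemma2}(i) to get $|e^{ix}-1|\lesssim|x|^\gamma$) are exactly what is needed, while for (iii)--(iv) the high--low frequency splitting together with Kato--Ponce/product estimates is the standard route. The paper itself does not supply a proof of this lemma at all; it simply states the result and attributes it to \cite{li2022lowregularity,li2021fully}, so there is no in-paper argument to compare against --- your sketch is in line with what those references carry out.
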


\begin{lemma}\label{newb}
	For $f,g\in H^{r}$ with $r\ge 1$, it holds
	\be\label{l243}
	\big\lVert \left\lv\partial_x\right\rv^{-2} \big[\left( \left\lv\partial_x\right\rv g)( \left\lv\partial_x\right\rv f  \right)\big]\big\rVert_r \lesssim \lVert f\rVert_r \lVert g \rVert_r.
	\ee
\end{lemma}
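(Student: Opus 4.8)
The natural plan is to pass to Fourier coefficients and reduce the inequality to a weighted bilinear bound for sequences, which is then settled by a frequency case distinction of Littlewood--Paley type together with two elementary convolution estimates. Write $h:=\lv\partial_x\rv^{-2}\big[(\lv\partial_x\rv g)(\lv\partial_x\rv f)\big]$, so that $\widehat h_0=0$ and, for $k\neq0$,
\[\widehat h_k=\frac{1}{k^2}\sum_{k_1+k_2=k}\lv k_1\rv\lv k_2\rv\,\widehat g_{k_1}\widehat f_{k_2}.\]
Put $a_k=(1+k^2)^{r/2}\lv\widehat f_k\rv$ and $b_k=(1+k^2)^{r/2}\lv\widehat g_k\rv$, so that $\|a\|_{\ell^2}=\|f\|_r$ and $\|b\|_{\ell^2}=\|g\|_r$. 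By the triangle inequality,
\[\|h\|_r^2\le\sum_{k\neq0}\Big(\sum_{k_1+k_2=k}K(k,k_1,k_2)\,b_{k_1}a_{k_2}\Big)^2,\qquad K(k,k_1,k_2)=\frac{(1+k^2)^{r/2}\lv k_1\rv\lv k_2\rv}{k^2(1+k_1^2)^{r/2}(1+k_2^2)^{r/2}},\]
with the convention $K=0$ when $k_1=0$ or $k_2=0$. Since the target is symmetric in $f\leftrightarrow g$ and interchanging $k_1,k_2$ swaps the two factors, it is enough to bound the portion of the inner sum with $\lv k_1\rv\le\lv k_2\rv$.

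I would next isolate two summation facts, each a one-line consequence of Young's and the Cauchy--Schwarz inequalities: (a) if $K\le\phi(k_2)$ on the relevant index set for some $\phi\in\ell^2(\mathbb Z)$, then the corresponding contribution is $\le\|\phi\|_{\ell^2}^2\|a\|_{\ell^2}^2\|b\|_{\ell^2}^2$ (via $\|b*(\phi a)\|_{\ell^2}\le\|b\|_{\ell^2}\|\phi a\|_{\ell^1}\le\|b\|_{\ell^2}\|\phi\|_{\ell^2}\|a\|_{\ell^2}$); and (b) if $K\le\psi(k)$ with $\psi\in\ell^2(\mathbb Z)$, then the contribution is $\le\|\psi\|_{\ell^2}^2\|a\|_{\ell^2}^2\|b\|_{\ell^2}^2$ (since Cauchy--Schwarz in the convolution gives $(\sum_{k_1+k_2=k}b_{k_1}a_{k_2})^2\le\|b\|_{\ell^2}^2\|a\|_{\ell^2}^2$ for every fixed $k$). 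The proof then reduces to exhibiting such a dominating $\phi$ or $\psi$ on each piece of a decomposition of $\{\lv k_1\rv\le\lv k_2\rv\}$.

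The decomposition has two regions. In the region $\lv k_1\rv\le\lv k_2\rv/2$ one has $\lv k\rv\sim\lv k_2\rv$, and using $r\ge1$ (hence $(1+k_1^2)^{r/2}\ge(1+k_1^2)^{1/2}\ge\lv k_1\rv$) the kernel collapses to $K\lesssim\lv k_2\rv^{-1}$, square-summable in $k_2$, so (a) applies. In the region $\lv k_2\rv/2<\lv k_1\rv\le\lv k_2\rv$ one has $\lv k_1\rv\sim\lv k_2\rv$, whence $(1+k_1^2)^{r/2}(1+k_2^2)^{r/2}\gtrsim(1+k_2^2)^{r}$ and $\lv k_1\rv\lv k_2\rv\le 1+k_2^2$; together with $\lv k\rv\ge1$ (the mean of $h$ vanishes) this gives
\[K\lesssim\frac{(1+k^2)^{r/2}}{k^2(1+k_2^2)^{r-1}}\lesssim\frac{\lv k\rv^{\,r-2}}{(1+k_2^2)^{r-1}}.\]
If $r\ge3/2$ I would dominate the $k$-factor trivially (by $1$ when $r<2$, and by a multiple of $\lv k_2\rv^{r-2}$ using $\lv k\rv\le2\lv k_2\rv$ when $r\ge2$), leaving a power $\lv k_2\rv^{-\min\{2(r-1),\,r\}}$ that is square-summable because $r>5/4$; then (a) applies. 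If $1\le r<3/2$ I would instead discard the $k_2$-factor and use (b) with $\psi(k)=\lv k\rv^{\,r-2}$, which lies in $\ell^2(\mathbb Z\setminus\{0\})$ precisely because $2r-4<-1$. Summing the finitely many contributions yields $\|h\|_r\lesssim\|f\|_r\|g\|_r$, and the complementary part $\lv k_1\rv>\lv k_2\rv$ follows by exchanging the roles of $f$ and $g$.

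The crux is the ``high $\times$ high $\to$ low'' interaction inside the second region, where $\lv k_1\rv\sim\lv k_2\rv$ is large while $k=k_1+k_2$ is small: there the bare symbol $\lv k_1\rv\lv k_2\rv/k^2$ is unbounded, and the estimate closes only because the Sobolev weights supply the gain $(1+k_2^2)^{1-r}$, supplemented for $r$ near $1$ by the square-summability of $\lv k\rv^{\,r-2}$ in the output frequency; it is this interaction (together with the first region) that forces the hypothesis $r\ge1$. The mild endpoint bookkeeping at $r=3/2$, switching from ``$k_2$-decay'' to ``$k$-decay'', is the last routine point.
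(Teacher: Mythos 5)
Your proposal is correct, and it reaches the conclusion by a genuinely different route than the paper. You estimate the weighted convolution kernel $K(k,k_1,k_2)$ directly on the Fourier side, splitting according to the relative size of the \emph{input} frequencies ($\lv k_1\rv\le\lv k_2\rv/2$ versus $\lv k_1\rv\sim\lv k_2\rv$), and close each piece with elementary Young/Cauchy--Schwarz summation facts; in the high--high regime you switch at $r=3/2$ between gaining square-summable decay in $k_2$ (from the Sobolev weights, needing $r>5/4$) and in the output frequency via $\lv k\rv^{r-2}\in\ell^2$ (needing $r<3/2$), so all of $r\ge 1$ is covered uniformly, including the endpoint $r=1$. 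The paper instead argues for $r>1$ by duality against a test function $\phi\in L^2$, splits on $\lv k_1\rv\le 2\lv k\rv$ versus $\lv k_1\rv>2\lv k\rv$ (i.e.\ input compared with the \emph{output} frequency), disposes of the first region with the Kato--Ponce bilinear estimate $\|fg\|_r\lesssim\|f\|_r\|g\|_r$ and of the second with an $L^1$--$\ell^2$ bound exploiting $(\lv k\rv^{-r})_{k\ne 0}\in\ell^2$, and then treats $r=1$ separately through Lemma \ref{lemma3} (iii). The trade-off: your argument is self-contained (no appeal to Lemma \ref{lemma1} or Lemma \ref{lemma3}) and avoids any special endpoint case, at the price of a somewhat finer case analysis in $r$; the paper's proof is shorter because it leans on already-established lemmas, but pays with the duality setup and the separate $r=1$ argument. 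Your symmetry reduction to $\lv k_1\rv\le\lv k_2\rv$ and the nonnegativity needed to restrict the convolution to sub-regions are both legitimate, so I see no gap.
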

\begin{proof}
	To show the above inequality for $r>1$, we only need to confirm
	\[\big\lVert \left\lv\partial_x\right\rv^{r-2} \big[\left( \left\lv\partial_x\right\rv g)( \left\lv\partial_x\right\rv f  \right)\big]\big\rVert \lesssim \lVert f\rVert_r \lVert g \rVert_r.\]
	According to the duality principle in $L^2$, it suffices to prove
	\[\left(\left\lv\partial_x\right\rv^{r-2} \big[\left( \left\lv\partial_x\right\rv g)( \left\lv\partial_x\right\rv f  \right)\big], \phi\right)\lesssim \| f\|_r \| g \|_r \|\phi\|, \quad \forall \phi \in L^2,\]
	which is equivalent to
	\[\sum\limits_{k\neq 0}\sum\limits_{k_1+k_2=k}\lv k\rv^{r-2}\lv k_1\rv\lv k_2\rv\widehat{f}_{k_1}\widehat{g}_{k_2}\overline{\widehat{\phi}_{k}}\lesssim \|f\|_r\|g\|_r\|\phi\|.\]
	To this aim, we divide the above formula into two parts by discussing the relationship between Fourier coefficients $k$ and $k_1$, i.e.,
	\begin{align}\label{kk1}
		\sum\limits_{k\not=0}\sum\limits_{k_1+k_2=k}\lv k\rv^{r-2}\lv k_1\rv\lv k_2\rv\widehat{f}_{k_1}\widehat{g}_{k_2}\overline{\widehat{\phi}_{k}}
		&= \sum\limits_{k\not=0}\sum\limits_{k_1+k_2=k \atop \lv k_1\rv \le 2\lv k\rv}\lv k\rv^{r-2}\lv k_1\rv\lv k_2\rv\widehat{f}_{k_1}\widehat{g}_{k_2}\overline{\widehat{\phi}_{k}}\notag\\
&\hspace{-6mm}+\sum\limits_{k\not=0}\sum\limits_{k_1+k_2=k \atop \lv k_1\rv > 2\lv k\rv}\lv k\rv^{r-2}\lv k_1\rv\lv k_2\rv\widehat{f}_{k_1}\widehat{g}_{k_2}\overline{\widehat{\phi}_{k}}.
	\end{align}
	For the first term in \eqref{kk1}, we have
	\[\lv k_2\rv=\lv k-k_1\rv\le \lv k\rv+\lv k_1\rv\le 3\lv k\rv.\]
	By using Plancherel's identity and the bilinear estimate, the first term can be bounded as
	\begin{align*}
		\sum\limits_{k\not=0}\sum\limits_{k_1+k_2=k \atop \lv k_1\rv \le 2\lv k\rv}\lv k\rv^{r-2}\lv k_1\rv \lv k_2\rv\widehat{f}_{k_1}\widehat{g}_{k_2}
		\overline{\widehat{\phi}_{k}}
		&\lesssim \sum\limits_{k\not=0}\sum\limits_{k_1+k_2=k \atop \lv k_1\rv \le 2\lv k\rv}\lv k\rv^{r}\big\lv\widehat{f}_{k_1}\big\rv\left\lv\widehat{g}_{k_2}\right\rv
		\big\lv\overline{\widehat{\phi}_{k}}\big\rv\\
		&\hspace{-3cm}\lesssim (\lv\pa_x\rv^{r}(\widetilde{f}\widetilde{g}),\widetilde{\phi})
	\lesssim \|\widetilde{f}\widetilde{g}\|_r\|\widetilde{\phi}\|\lesssim \|\widetilde{f}\|_r \|\widetilde{g}\|_r\|\widetilde{\phi}\|\lesssim \|f\|_r\|g\|_r\|\phi\|,
	\end{align*}
	where $\widetilde{f}, \widetilde{g}$ and $\widetilde{\phi}$ are defined in  \eqref{def1}.
	
	For the second term in \eqref{kk1}, thanks to $\lv k_1\rv > 2\lv k\rv$, we are led to
	\[\lv k_2\rv=\lv k_1-k\rv\ge \lv k_1\rv-\lv k\rv>\lv k\rv.\]
	For $r>1$, it holds
	\[\lv k\rv^{r-2}\lv k_1\rv\lv k_2\rv=\lv k\rv^{-r}\lv k\rv^{2r-2}\lv k_1\rv\lv k_2\rv\lesssim \lv k\rv^{-r}\lv k_1\rv^{r}\lv k_2\rv^{r},\]
	which implies
	\begin{align*}
		&\sum\limits_{k\not=0}\sum\limits_{k_1+k_2=k \atop \lv k_1\rv > 2\lv k\rv}\lv k\rv^{r-2}\lv k_1\rv\lv k_2\rv\widehat{f}_{k_1}\widehat{g}_{k_2}\overline{\widehat{\phi}_{k}}
		\lesssim \sum\limits_{k\not=0}\sum\limits_{k_1+k_2=k \atop \lv k_1\rv > 2\lv k\rv}\lv k\rv^{-r}\lv k_1\rv^{r}\lv k_2\rv^{r}\big\lv\widehat{f}_{k_1}\big\rv\left\lv\widehat{g}_{k_2}\right\rv\big\lv\overline{\widehat{\phi}_{k}}\big\rv\\
		&\quad\lesssim \sum\limits_{k\not=0}\mathcal{F}_k(\lv\pa_x\rv^{r}\widetilde{f}\lv\pa_x\rv^{r}\widetilde{g})\lv k\rv^{-r}\lv\widehat{\phi}_k\rv\\
		&\quad\lesssim \mathop{{\rm max}}\limits_{k}\left\lv\int_{\mathbb{T}}\lv\pa_x\rv^{r}\widetilde{f}(x)\lv\pa_x\rv^{r}\widetilde{g}(x)e^{-ikx}dx\right\rv\sum\limits_{k\not=0}\lv k\rv^{-r}\lv\widehat{\phi}_k\rv\\
		&\quad\lesssim \|\lv\pa_x\rv^{r}\widetilde{f}\lv\pa_x\rv^{r}\widetilde{g}\|_{L^1}\|(\lv k\rv^{-r})_{0\not=k\in \mathbb{Z}}\|_{l^2}\|(\lv\widehat{\phi}_k\rv)_{0\not=k\in \mathbb{Z}}\|_{l^2}\\
		&\quad\lesssim \|\lv\pa_x\rv^{r}\widetilde{f}\| \|\lv\pa_x\rv^{r}\widetilde{g}\| \|\widetilde{\phi}\|\\
		&\quad\lesssim \|f\|_r \|g\|_r\|\phi\|.
	\end{align*}
	The proof is completed for the case of $r>1$.
	For the case of $r=1$, by using the result in Lemma \ref{lemma3} (iii)
	\[\| \lv\pa_x\rv^{-1}(g\lv\pa_x\rv f )\| \lesssim \| J^{-1}(g\lv\pa_x\rv f )\|\lesssim \| J^{-1}(\widetilde{g}(J \widetilde{f}) )\|\lesssim \|f\|_1\|g\|,\]
	we are led to
	\begin{align*}
		\| \left\lv\partial_x\right\rv^{-2} \big[\left( \left\lv\partial_x\right\rv g)( \left\lv\partial_x\right\rv f  \right)\big]\|_1&\lesssim \| \left\lv\partial_x\right\rv^{-1} \big[\left( \left\lv\partial_x\right\rv g)( \left\lv\partial_x\right\rv f  \right)\big]\|\notag\\
&\lesssim \|f\|_1\|\lv\pa_x\rv g\|\lesssim \|f\|_1\|g\|_1.
	\end{align*}
	This completes the proof.
\end{proof}

\section{First-order exponential-type integrator $\Psi_1^{\tau}$}\label{3}
In the following part, we construct the first-order LREI based on the idea in \eqref{idea1}--\eqref{idea2}.
\subsection{Homogenization and reformulation of the GB equation}
As can be seen blow, we will frequently encounter the operator $\pa_x^{-1}$ or $\pa_x^{-2}$ during the process of integration, which makes the mean value of the obtained function zero. Hence usually the zero-mode needs to be treated separately. Fortunately, thanks to the periodic boundary conditions, the zero-mode of $z$ can be integrated exactly so that it remains to investigate other nonzero Fourier modes.

By the periodicity of the solution, one easily gets
\begin{align*}
	\mathcal{F}_0(z_{tt})=\partial_{tt}\mathcal{F}_0(z)=0,
\end{align*}
which immediately gives $\mathcal{F}_0(z)=at+b$, where $a$ and $b$ are defined as \eqref{initial}.
Setting $z=\mathcal{F}_0(z)+\check{z}$ and plugging it into \eqref{GB}, we derive that
\be\label{HGB}
\left\{\begin{aligned}
&\check{z}_{tt}+\check{z}_{xxxx}-(2at+2b+1)\check{z}_{xx}-(\check{z}^2)_{xx}=0, \quad x \in \mathbb{T}, \quad t>0,\\
&\check{z}(0, x)=\phi(x)-b,\quad \check{z}_{t}(0, x)=\psi(x)-a.
\end{aligned}\right.
\ee
Diagonalize the equivalent first-order system
	$$\begin{pmatrix}
		\check z\\
		\check{z}_t
	\end{pmatrix}_t=\begin{pmatrix}
		0&\ \ 1\\
		-\pa_x^4+\pa_x^2& \ \ 0
	\end{pmatrix}\begin{pmatrix}
		\check z\\
		z_t
	\end{pmatrix}+\begin{pmatrix}
	0\\
	(\check{z}^2)_{xx}+(2at+2b)\check{z}_{xx}
\end{pmatrix},$$
and set
\[\la \pa_x^2 \ra=\sqrt{\pa_x^2+\pa_x^4}, \quad u=\check{z}-i\la \pa_x^2 \ra^{-1}\check{z}_t, \quad v=\ov{\check{z}}-i\la \pa_x^2 \ra^{-1}\ov{\check{z}_t},\]
we are led to the following coupled system
\be\label{GBcoup}
\left\{	\begin{aligned}
	&i\partial_{t}u =-\langle \partial_x^2 \rangle  u +  B\left[ \frac{1}{4}(u + \bar{v})^2+(at+b)(u + \bar{v})\right],    \\
	&i\partial_{t}v =-\langle \partial_x^2 \rangle  v + B \left[ \frac{1}{4}(\bar{u} + v)^2+(at+b)(\bar{u} + v)\right],
\end{aligned}
\right.\ee
where the operator $B$ is defined in \eqref{AB}. Recalling that $z$ is a real function, this implies $u=v$ and \eqref{GBcoup} reduces to a single first-order equation involving a complex variable
\be\label{GBfirst}\left\{
\begin{aligned}
	&i\partial_{t}u =-\langle \partial_x^2 \rangle  u +  B \left[ \frac{1}{4}(u + \overline{u})^2+(at+b)(u + \overline{u})\right],\\
&u(0,x)=\check{z}(0,x)-i\la \pa_x^2 \ra^{-1}\check{z}_t(0,x).
\end{aligned}\right.
\ee
While $\check{z}$ and $\check{z}_t$ can be recovered through
\be\label{uvz}
\check{z}=\frac{1}{2}(u+\ov{u}), \quad
	\check{z}_t=\frac{i}{2} \la \pa_x^2 \ra^{} (u-\ov{u}).
\ee

Noticing that the leading term of $\la \pa_x^2 \ra$ is $-\pa_x^2$, we introduce the so-called $twisted$ $variable$
\[w(t)={\rm e}^{it\pa_x^2}u(t).\]
Plugging it into $\eqref{GBfirst}$ yields
\be\label{aftertw}
\partial_t w = iAw-\frac{i}{4}{\rm e}^{it \partial_x^2 } B({\rm e}^{-it \partial_x^2 }w+{\rm e}^{it \partial_x^2 }\overline{w})^2-i(at+b){\rm e}^{it \partial_x^2 } B({\rm e}^{-it \partial_x^2 }w+{\rm e}^{it \partial_x^2 }\overline{w}).
\ee
Applying  Duhamel's formula of $\eqref{aftertw}$, we obtain
\begin{align}\label{duhamel}
	w(t_n+\sigma)&={\rm e}^{i\sigma A}w(t_n)-\frac{iB}{4}\int_0^{\sigma}{\rm e}^{i(\sigma-s) A}{\rm e}^{i(t_n+s) \partial_x^2}(g_1(w(t_n), s))^2ds\notag \\
	&\quad-iB\int_0^{\sigma}{\rm e}^{i(\sigma-s) A}{\rm e}^{i(t_n+s) \partial_x^2}[a(t_n+s)+b]g_1(w(t_n), s)ds,
\end{align}
where $A$ and $B$ are defined in \eqref{AB}, and
\[g_1(w(t_n), s)={\rm e}^{-i(t_n+s) \partial_x^2}w(t_n+s)+{\rm e}^{i(t_n+s) \partial_x^2}\overline{w(t_n+s)}.\]

Based on this, a first-order approximation can be easily derived \cite{ostermann2019two}
	\be\label{l5}
	\left\lVert w(t_n+\sigma)-w(t_n)\right\rVert_r \le C\sigma ,\quad r>1/2,
	\ee
	where $C$ only depends on $\mathop{{\rm sup}}\limits_{0\leq s \leq \sigma}	\left\lVert u(t_n+s)\right\rVert_r$.
Setting $\sigma=\tau$ and approximating $w(t_n+s)$ by $w(t_n)$ in the integral of \eqref{duhamel}, applying \eqref{l5} and Lemma \ref{lemma3} (i), we get
\begin{align}\label{duhamelfirst}
	w(t_n+\tau)&={\rm e}^{i\tau A}w(t_n)-\frac{i}{4}B{\rm e}^{i\tau A}\int_0^{\tau}{\rm e}^{i(t_n+s) \partial_x^2}(g_2(w(t_n), s))^2ds\notag \\
	&\quad-iB{\rm e}^{i\tau A}\int_0^{\tau}{\rm e}^{i(t_n+s) \partial_x^2}(at_n+b)g_2(w(t_n), s)ds+\mathcal{R}_{0}(\tau^2),
\end{align}
where $g_2(w(t_n), s)={\rm e}^{-i(t_n+s) \partial_x^2}w(t_n)+{\rm e}^{i(t_n+s) \partial_x^2}\overline{w(t_n)}$.

Twisting the variable back, we obtain an approximation of $u(t_n+\tau)$ with a local error of order two
\begin{align}
	u(t_n+\tau)&={\rm e}^{i\tau \langle \partial_x^2 \rangle}u(t_n)-\frac{i}{4}B{\rm e}^{i\tau \langle \partial_x^2 \rangle}\int_0^{\tau}{\rm e}^{is \partial_x^2}({\rm e}^{-is \partial_x^2}u(t_n)+{\rm e}^{is \partial_x^2}\overline{u}(t_n))^2ds\notag \\&\ -i(at_n+b)B{\rm e}^{i\tau \langle \partial_x^2 \rangle}\int_0^{\tau}{\rm e}^{is \partial_x^2}({\rm e}^{-is \partial_x^2}u(t_n) +{\rm e}^{is \partial_x^2}\overline{u}(t_n))ds+\mathcal{R}_{0}(\tau^2)\notag\\
	&={\rm e}^{i\tau \langle \partial_x^2 \rangle}u(t_n)-\frac{i}{4}B^{\tau}\big[I_0^{\tau}(u(t_n))+I_1^{\tau}(u(t_n))+2I_2^{\tau}(u(t_n))\big]\notag\\
	&\quad-i\tau (at_n+b)B^{\tau}\big(u(t_n) +\psi_1(2i\tau \partial_x^2)\overline{u}(t_n)\big)+\mathcal{R}_{0}(\tau^2),\label{uapp}
\end{align}
where $\psi_1$ is given by \eqref{psi1} and
 \begin{align}
	&B^{\tau}(f)=B{\rm e}^{i\tau \langle \partial_x^2\rangle}f=\langle \partial_x^2 \rangle^{-1}\partial_{x}^2{\rm e}^{i\tau \langle \partial_x^2\rangle}f, \quad	I_{0}^{\tau}(f)=\int_0^{\tau}{\rm e}^{is\partial_x^2}\big({\rm e}^{is\partial_x^2}\overline{f}\big)^2ds,\label{BL0}\\
	&I_{1}^{\tau}(f)=\int_0^{\tau}{\rm e}^{is\partial_x^2}\big({\rm e}^{-is\partial_x^2}f\big)^2ds,\qquad\,\, I_{2}^{\tau}(f)=\int_0^{\tau}{\rm e}^{is\partial_x^2}\big\lv{\rm e}^{-is\partial_x^2}f\big\rv^2ds.\label{L12}
\end{align}

Now we calculate the terms $I_j^\tau$ in \eqref{uapp} respectively.
Firstly for $f$ satisfying $\mathcal{F}_0(f)=0$, as was shown in \cite{ostermann2019two}, $I_{1}^{\tau}(f)$ and $I_{2}^{\tau}(f)$ can be calculated exactly as
\begin{align}
	I_{1}^{\tau}(f)&=\sum\limits_{k}\sum\limits_{k_1+k_2=k}\int_0^{\tau}{\rm e}^{-is(k^2-k_1^2-k_2^2)}ds \widehat{f}_{k_1}\widehat{f}_{k_2}{\rm e}^{ikx}\notag\\
	&=\Big(\sum\limits_{k}\sum\limits_{k_1+k_2=k \atop k_1\not=0,  k_2\not=0}\frac{{\rm e}^{-2i\tau k_1k_2}-1}{-2ik_1k_2}+\sum\limits_{k}\sum\limits_{k_1+k_2=k \atop k_1=0\ {\rm or} \ k_2=0}\int_0^{\tau}ds\Big)\widehat{f}_{k_1}\widehat{f}_{k_2}{\rm e}^{ikx}\notag\\
	&=\sum\limits_{k}\sum\limits_{k_1+k_2=k \atop k_1\not=0,  k_2\not=0}\frac{{\rm e}^{-2i\tau k_1k_2}-1}{-2ik_1k_2}\widehat{f}_{k_1}\widehat{f}_{k_2}{\rm e}^{ikx}+2\tau \widehat{f}_{0}\sum\limits_{k\in \mathbb{Z}}\widehat{f}_{k}{\rm e}^{ikx}-\tau\widehat{f}_{0}^2\notag\\
	&=\frac{i}{2}\bigg[(\partial_x^{-1}f)^2-{\rm e}^{i\tau \partial_x^2}({\rm e}^{-i\tau \partial_x^2}\partial_x^{-1}f)^2\bigg],\label{l1f}\\
	I_{2}^{\tau}(f)&=\sum\limits_{k_1, k_2\in\mathbb{Z}}
	\int_0^\tau e^{is(k_1^2-k_2^2-(k_1-k_2)^2)}ds \widehat{f}_{k_1}
	\overline{\widehat{f}_{k_2}}e^{i(k_1-k_2)x}\notag\\
	&\hspace{-5mm}=-\frac{i}{2}{\rm e}^{i\tau \partial_x^2}\partial_x^{-1}\big[({\rm e}^{-i\tau \partial_x^2}f)({\rm e}^{i\tau \partial_x^2}\partial_x^{-1}\overline{f})\big]+\frac{i}{2}\partial_x^{-1}\big[f(\partial_x^{-1}\overline{f})\big]+\tau \lVert f \rVert^2.\label{l2f}
\end{align}

It remains to calculate $I_0^\tau(f)$ which reads as
\be\label{I0}
I_0^\tau(f)=\sum\limits_{k\in \mathbb{Z}}\sum\limits_{k_1+k_2=k}\int_0^{\tau}{\rm e}^{-is\Phi}ds \widehat{\overline{f}}_{k_1}\widehat{\overline{f}}_{k_2}{\rm e}^{ikx} \quad {\rm with} \ \ \Phi=k^2+k_1^2+k_2^2.\ee
Different from the above two terms $I_1^\tau(f)$ and $I_2^\tau(f)$, in which the obtained integration is a function with separable variables $k$, $k_1$ and $k_2$ that enables us to compute the obtained convolution efficiently in physical space or Fourier space, it is impossible to compute the exact integral of $I_0^\tau$ efficiently in any space. To overcome this difficulty, in \cite{li2022lowregularity}, we proposed an approximation of $I_0^\tau$ by applying the identity  $1=\frac{k_1+k_2}{k}$ and an appropriate approximation which can be computed efficiently. In this paper, we utilize similar idea based on the identity
\[1=\frac{k_1^2+k_2^2+2k_1k_2}{k^2}, \quad \Phi=2k_2^2+2kk_1=2k_1^2+2kk_2=2k^2-2k_1k_2,\]
and approximate the corresponding integrals in a proper way.

\subsection{The first-order exponential-type integrator $\Psi_1^{\tau}$}

\noindent \textbf{Case I.} When $k=0$, the mean value of $I_0^\tau(f)$ can be computed exactly and efficiently by
\be\label{t0}
\mathcal{F}_0\left(I_0^\tau(f)\right)=\frac{i}{2}\mathcal{F}_0\big[\big({\rm e}^{i\tau \partial_x^2}\partial_x^{-1}\overline{f}\big)^2\big]-\frac{i}{2}\mathcal{F}_0
\big[\big(\partial_x^{-1}\overline{f}\big)^2\big]+\tau \big(\widehat{\ov{f}}_0\big)^2=T_0^{\tau}(f).
\ee
\noindent \textbf{Case II.} When $k\not=0$, in order to balance the power of  $k_1$ and $k_2$ as much as possible in the following estimations, we use different forms of the phase function $\Phi$ as $2k_2^2+2kk_1$, $2k_1^2+2kk_2$, and $2k^2-2k_1k_2$ for coefficients $\frac{k_1^2}{k^2}$, $\frac{k_2^2}{k^2}$, and $\frac{2k_1k_2}{k^2}$, respectively. Specifically,
\begin{align}\label{t1t2t3}
	\sum\limits_{k\not=0}\mathcal{F}_k\left(I_0^\tau(f)\right){\rm e}^{ikx}&=\sum\limits_{k\not=0}\sum\limits_{k_1+k_2=k}\int_0^{\tau}{\rm e}^{-is(k^2+k_1^2+k_2^2)}ds \widehat{\overline{f}}_{k_1}\widehat{\overline{f}}_{k_2}{\rm e}^{ikx}\notag\\
	&=\sum\limits_{k\not=0}\sum\limits_{k_1+k_2=k}\frac{k_1^2}{k^2}\int_0^{\tau}{\rm e}^{-2is(k_2^2+kk_1)}ds \widehat{\overline{f}}_{k_1}\widehat{\overline{f}}_{k_2}{\rm e}^{ikx}\notag\\
&\quad+\sum\limits_{k\not=0}\sum\limits_{k_1+k_2=k}\frac{k_2^2}{k^2}\int_0^{\tau}{\rm e}^{-2is(k_1^2+kk_2)}ds \widehat{\overline{f}}_{k_1}\widehat{\overline{f}}_{k_2}{\rm e}^{ikx}\notag\\
&\quad+\sum\limits_{k\not=0}\sum\limits_{k_1+k_2=k}\frac{2k_1k_2}{k^2}\int_0^{\tau}{\rm e}^{-2is(k^2-k_1k_2)}ds \widehat{\overline{f}}_{k_1}\widehat{\overline{f}}_{k_2}{\rm e}^{ikx}\notag\\
	&=T_1^{\tau}(f)+T_2^{\tau}(f)+T_3^{\tau}(f).
\end{align}
By symmetry, obviously we have $T_1^{\tau}(f)=T_2^{\tau}(f)$ and it suffices to approximate $T_1^{\tau}(f)$ and $T_3^{\tau}(f)$. To begin with, we decompose $T_1^{\tau}(f)$ as
\begin{align}\label{t1}
T_1^{\tau}(f)&=\sum\limits_{k\not=0}\sum\limits_{k_1+k_2=k}\frac{k_1^2}{k^2}\int_0^{\tau}{\rm e}^{-2is(k_2^2+kk_1)}ds \widehat{\overline{f}}_{k_1}\widehat{\overline{f}}_{k_2}{\rm e}^{ikx}\notag\\
	&=\sum\limits_{k\not=0}\sum\limits_{k_1+k_2=k}\frac{k_1^2}{k^2}\int_0^{\tau}{\rm e}^{-2isk_2^2}ds \widehat{\overline{f}}_{k_1}\widehat{\overline{f}}_{k_2}{\rm e}^{ikx}\notag\\
&\quad+\sum\limits_{k\not=0}\sum\limits_{k_1+k_2=k}\frac{k_1^2}{k^2}\int_0^{\tau}\left({\rm e}^{-2iskk_1}-1\right)ds \widehat{\overline{f}}_{k_1}\widehat{\overline{f}}_{k_2}{\rm e}^{ikx}\notag\\	&\quad+\sum\limits_{k\not=0}\sum\limits_{k_1+k_2=k}\frac{k_1^2}{k^2}\int_0^{\tau}\big({\rm e}^{-2isk_2^2}-1\big)\left({\rm e}^{-2iskk_1}-1\right)ds \widehat{\overline{f}}_{k_1}\widehat{\overline{f}}_{k_2}{\rm e}^{ikx}\notag\\
	&=L_1^{\tau}(f)+L_2^{\tau}(f)+P_1^{\tau}(f).
\end{align}
For $f$ with $\mathcal{F}_0(f)=0$, similarly $L_1^{\tau}(f)$ and $L_2^{\tau}(f)$ can be integrated exactly as
\begin{align}
	L_1^{\tau}(f)&=\sum\limits_{k\not=0}\sum\limits_{k_1+k_2=k \atop k_2=0}\frac{k_1^2}{k^2}\int_0^{\tau}{\rm e}^{-2isk_2^2}ds \widehat{\overline{f}}_{k_1}\widehat{\overline{f}}_{k_2}{\rm e}^{ikx}\notag\\
	&\quad+\sum\limits_{k\not=0}\sum\limits_{k_1+k_2=k \atop k_2\not=0}\frac{k_1^2}{k^2}\int_0^{\tau}{\rm e}^{-2isk_2^2}ds \widehat{\overline{f}}_{k_1}\widehat{\overline{f}}_{k_2}{\rm e}^{ikx}\notag\\
	&=-\frac{i}{2}\pa_x^{-2}\left[\left({\rm e}^{2i\tau \pa_x^2}\pa_x^{-2}\ov{f}\right)\left(\pa_x^2\ov{f}\right)\right]+\frac{i}{2}\pa_x^{-2}
\left[\left(\pa_x^{2}\ov{f}\right)\left(\pa_x^{-2}\ov{f}\right)\right],\label{i1}\\
	L_2^{\tau}(f)&=-\frac{i}{2}{\rm e}^{i\tau \pa_x^2}\pa_x^{-3}\left[\left({\rm e}^{i\tau \pa_x^2}\pa_x^{}\ov{f}\right)\left({\rm e}^{-i\tau \pa_x^2}\ov{f}\right)\right]+\frac{i}{2}\pa_x^{-3}\left[\left(\pa_x^{}\ov{f}\right)\ov{f}\right]\notag\\ &\quad-\tau\pa_x^{-2}\left[\left(\pa_x^{2}\ov{f}\right)\ov{f}\right].\label{i2}
\end{align}
The remainder term $P_1^\tau(f)$ will be thrown away in the scheme and the estimate is postponed to the next section.
Similarly $T_3^{\tau}(f)$ can be decomposed as
\begin{align}\label{t3}
T_3^{\tau}(f)&=\sum\limits_{k\not=0}\sum\limits_{k_1+k_2=k}\frac{2k_1k_2}{k^2}\int_0^{\tau}{\rm e}^{-2is(k^2-k_1k_2)}ds \widehat{\overline{f}}_{k_1}\widehat{\overline{f}}_{k_2}{\rm e}^{ikx}\notag\\
	&=\sum\limits_{k\not=0}\sum\limits_{k_1+k_2=k}\frac{2k_1k_2}{k^2}\int_0^{\tau}{\rm e}^{-2isk^2}ds \widehat{\overline{f}}_{k_1}\widehat{\overline{f}}_{k_2}{\rm e}^{ikx}\notag\\
&\quad+\sum\limits_{k\not=0}\sum\limits_{k_1+k_2=k}\frac{2k_1k_2}{k^2}\int_0^{\tau}\left({\rm e}^{2isk_1k_2}-1\right)ds \widehat{\overline{f}}_{k_1}\widehat{\overline{f}}_{k_2}{\rm e}^{ikx}\notag\\
&\quad+\sum\limits_{k\not=0}\sum\limits_{k_1+k_2=k}\frac{2k_1k_2}{k^2}\int_0^{\tau}\big({\rm e}^{-2isk^2}-1\big)\left({\rm e}^{2isk_1k_2}-1\right)ds \widehat{\overline{f}}_{k_1}\widehat{\overline{f}}_{k_2}{\rm e}^{ikx}\notag\\
	&=L_3^{\tau}(f)+L_4^{\tau}(f)+P_2^{\tau}(f),
\end{align}
where $L_3^{\tau}(f)$ and $L_4^{\tau}(f)$ can be integrated exactly as
\begin{align}
	L_3^{\tau}(f)&=-i\pa_x^{-4}\left({\rm e}^{2i\tau \pa_x^2}-1\right)\left(\pa_x\ov{f}\right)^2, \label{i3}\\
	L_4^{\tau}(f)&=i\pa_x^{-2}{\rm e}^{-i\tau \pa_x^2}\left({\rm e}^{i\tau \pa_x^2}\ov{f}\right)^2-i\pa_x^{-2}\left(\ov{f}\right)^2-2\tau\pa_x^{-2}
\left(\pa_x\ov{f}\right)^2.\label{i4}
\end{align}

Combining \eqref{uapp}, \eqref{l1f}, \eqref{l2f}, \eqref{t1} and \eqref{t3}, we obtain
\begin{align}\label{firstorderapp}
u(t_n+\tau)=\Psi_1^{\tau}(u(t_n))-\frac{i}{2}B^{\tau}P_1^\tau(u(t_n))-\frac{i}{4}B^{\tau}P_2^\tau(u(t_n))+\mathcal{R}_{0}(\tau^2),
\end{align}
where
\begin{align}
	\Psi_1^{\tau}(f)&={\rm e}^{i\tau \langle \partial_x^2 \rangle}f-\frac{i}{4}B^{\tau}\big[T_0^{\tau}(f)+2L_1^{\tau}(f)+2L_2^{\tau}(f)+L_3^{\tau}
(f)+L_4^{\tau}(f)+I_1^{\tau}(f)\notag\\
	&\quad +2I_2^{\tau}(f)\big]-i\tau (at_n+b)B^{\tau}\big(f +\psi_1(2i\tau \partial_x^2)\overline{f}\big),\label{sch1}
\end{align}
with operators $B^{\tau}$, $I_1^{\tau}$, $I_2^{\tau}$, $T_0^{\tau}$, $L_1^{\tau}$, $L_2^{\tau}$, $L_3^{\tau}$, $L_4^{\tau}$ defined in \eqref{BL0}, \eqref{l1f}, \eqref{l2f}, \eqref{t0}, \eqref{i1}, \eqref{i2}, \eqref{i3}, \eqref{i4} respectively. Furthermore, noticing $B^{\tau}T_0^{\tau}(f)=0$, we can rewrite \eqref{sch1} simply as
\begin{align}\label{sch2}
	\Psi_1^{\tau}(f)&={\rm e}^{i\tau \langle \partial_x^2 \rangle}f-\frac{i}{4}B^{\tau}\big[2L_1^{\tau}(f)+2L_2^{\tau}(f)+L_3^{\tau}(f)+L_4^{\tau}(f)\notag\\
	&\quad +I_1^{\tau}(f)+2I_2^{\tau}(f)\big]-i\tau (at_n+b)B^{\tau}\big(f +\psi_1(2i\tau \partial_x^2)\overline{f}\big),
\end{align}
which is exactly \eqref{sch0} when $I^\tau_j$ and $L_j^\tau$ are plugged in.
Recalling \eqref{uvz} and $z=\mathcal{F}_0(z)+\check{z}$, now we are able to propose the scheme
\be\label{znscheme}
z^n=\frac{1}{2}(u^n+\overline{u^n})+at_n+b,\quad
	z^n_t=\frac{i}{2}\langle\partial_x^2\rangle(u^n-\overline{u^n})+a,
\ee
where
\be\label{firstsch}
u^{n+1}=\Psi_1^{\tau}(u^{n}),\quad n\ge 0, \quad u^0=u(0, x).
\ee
The proposed scheme is fully explicit in time and it is easy to implement efficiently if pseudospectral method is used for spatial discretization thanks to FFT.

\section{Error estimates}\label{4}
In this section, we will establish the global error estimate concerning the first-order scheme \eqref{sch2}.

\subsection{Local error estimate}
In this part, we give the local error estimate of the scheme \eqref{sch2}. Inspired by \eqref{firstorderapp} and \eqref{l241}, it remains to estimate $P_1^{\tau}(f)$  and $P_2^\tau(f)$.

\begin{lemma}\label{P1es}
For $r\ge 1$, $f\in H^{r+p(r)}$, it holds
\[\|P_1^\tau(f)\|_r\lesssim \tau^2\|f\|_{r+p(r)}^2.\]
\end{lemma}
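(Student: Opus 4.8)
The idea is to pass to the Fourier side, generate the two powers of $\tau$ from the two difference factors $e^{-2isk_2^2}-1$ and $e^{-2iskk_1}-1$ in the definition \eqref{t1} of $P_1^\tau$, and then distribute the remaining powers of the frequencies $k,k_1,k_2$ between the two copies of $\widehat{\overline f}$ so that the resulting bilinear sums fall under the tools of Section \ref{sec1}.

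\emph{Step 1 (reduction to a weighted bilinear sum).} In the expansion \eqref{t1} of $P_1^\tau(f)$ only indices $k\neq0,\ k_1\neq0,\ k_2\neq0$ contribute (the factor $k_1^2/k^2$ kills $k_1=0$ and $e^{-2isk_2^2}-1$ kills $k_2=0$), so $\mathcal F_0(P_1^\tau(f))=0$ and $\|P_1^\tau(f)\|_r\sim\||\partial_x|^rP_1^\tau(f)\|$. For any $\theta_1,\theta_2\in[0,1]$ with $\theta_1+\theta_2=1$, Lemma \ref{lemma2}(i) gives $|e^{-2isk_2^2}-1|\lesssim(sk_2^2)^{\theta_1}$ and $|e^{-2iskk_1}-1|\lesssim(s|k||k_1|)^{\theta_2}$, so that $\int_0^\tau|(e^{-2isk_2^2}-1)(e^{-2iskk_1}-1)|\,ds\lesssim\tau^2|k_2|^{2\theta_1}|k|^{\theta_2}|k_1|^{\theta_2}$. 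Bounding all Fourier coefficients in modulus reduces the claim to
\[
\Big\|\sum_{k\neq0}|k|^{\,r-2+\theta_2}\sum_{k_1+k_2=k}|k_1|^{\,2+\theta_2}\,|k_2|^{\,2\theta_1}\,|\widehat{\overline f}_{k_1}|\,|\widehat{\overline f}_{k_2}|\,e^{ikx}\Big\|\lesssim\|f\|_{r+p(r)}^2 ,
\]
where $\theta_1,\theta_2$ are still free and, crucially, may be chosen differently on different frequency ranges and as functions of $r$ (recall $\|\overline f\|_\alpha=\|f\|_\alpha$).

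\emph{Step 2 (frequency decomposition).} Following the proof of Lemma \ref{newb}, split the inner sum into the comparable region $|k_1|\le2|k|$ (where also $|k_2|\le3|k|$ and $|k|\sim\max\{|k_1|,|k_2|\}$) and the high--high region $|k_1|>2|k|$ (where $|k_2|\sim|k_1|$ and $|k|\lesssim|k_1|$), and symmetrically $|k_2|>2|k|$. In the comparable region the factor $|k|^{r-2+\theta_2}$ is transferred onto $|k_1|,|k_2|$ — via $|k|\le|k_1|+|k_2|$ if $r-2+\theta_2\ge0$, and via $|k|\gtrsim|k_i|$ if $r-2+\theta_2<0$, in which case this negative power actually helps — leaving a finite sum of bilinear terms $\||\partial_x|^{\alpha_1}\widetilde{\overline f}\,|\partial_x|^{\alpha_2}\widetilde{\overline f}\|$ with $\alpha_1,\alpha_2\ge0$, $\alpha_1+\alpha_2=r+2$. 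These are controlled by Hölder's inequality and the Sobolev embeddings of Lemma \ref{sobolevcompact}(ii); this succeeds exactly when $\max\{\alpha_1,\alpha_2\}\le r+p(r)$ and $2\bigl(r+p(r)\bigr)\ge r+\tfrac52$. In the high--high region one splits the surplus power of $|k_1|\sim|k_2|$ between the two factors in the same way and absorbs $|k|^{r-2+\theta_2}$ either pointwise (via $|k|\lesssim|k_1|$) or, when a negative power of $|k|$ must be summed in $k$, via the Hardy--Littlewood--Sobolev inequality of Lemma \ref{sobolevcompact}(i), again reducing to a bilinear $L^2$ product of the above type; at $r=1$ one uses instead Lemma \ref{lemma3}(iii) together with Lemma \ref{newb}, which is why the hypothesis $r\ge1$ is needed. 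The requirement that the relevant Sobolev index (or the exponent of the summed negative power of $|k|$) be \emph{strictly} admissible is precisely what produces the ``$+$'' in $p(r)$ on the subintervals $(1,7/6]$ and $(17/12,3/2]$ and at $r=5/2$.

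\emph{Step 3 (optimization).} For each of the ranges $r=1$, $1<r\le7/6$, $7/6<r\le17/12$, $17/12<r\le3/2$, $3/2<r<5/2$, $r\ge5/2$ one now chooses $\theta_1(r),\theta_2(r)$ on each frequency region — they will generally differ between the comparable region, which favours a small $\theta_2$, and the high--high region, which favours a large $\theta_2$ — and chooses the splitting of the frequency weights, so as to make the smallest admissible $p(r)$ satisfy all of the above constraints; carrying out this somewhat tedious balancing reproduces the stated piecewise formula. I expect the genuine difficulty to be the high--high interaction, where the oscillation yields no cancellation and regularity must be spent purely to make the frequency sum converge — this is what creates the kinks at $r=7/6,\,17/12,\,3/2$ and the threshold $p(r)=0$ for $r>5/2$ — together with the endpoint bookkeeping of the Sobolev and Hardy--Littlewood--Sobolev estimates; everything else is a routine application of the lemmas of Section \ref{sec1}.
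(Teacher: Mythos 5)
Your Step 1 coincides with the paper's starting point (the coefficient bound \eqref{local1}--\eqref{P1e} with two interpolation parameters), but from there the argument diverges and, more importantly, stops short of the lemma's actual content. The whole point of Lemma \ref{P1es} is the precise piecewise exponent $p(r)$, and your proposal never derives it: Step 3 simply asserts that a ``somewhat tedious balancing reproduces the stated piecewise formula''. The two constraints you do record ($\max\{\alpha_1,\alpha_2\}\le r+p(r)$ and $2(r+p(r))\ge r+\tfrac52$) are only those of the H\"older--Sobolev route and are not the binding ones on the low-regularity ranges; what actually shapes $p(r)$ there in the paper is the admissibility window of the Hardy--Littlewood--Sobolev step (the exponent of $|k|$ must lie in $(-1/2,0]$, forcing $\beta>3/2-r$ in \eqref{localcompact}, whence \eqref{32l}) together with the estimate \eqref{l2444} of Lemma \ref{lemma3}(iv) (which produces the $2/3$ plateau in \eqref{locallemma3}/\eqref{32r}) and the Kato--Ponce inequalities \eqref{kp1} and \eqref{kp} (which give \eqref{pes1} and the no-loss bound \eqref{localp1} for $r>5/2$). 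None of these mechanisms, nor the breakpoints $7/6,\,17/12,\,3/2,\,5/2$ they create, is identified in your sketch, and your stated toolkit for the high--high region (H\"older, Lemma \ref{sobolevcompact}, and ``Lemma \ref{lemma3}(iii) together with Lemma \ref{newb} at $r=1$'') is not shown to close any of the cases. There is also an unaddressed technical point: after restricting to a frequency region you cannot apply H\"older or HLS directly to the restricted bilinear expression; one must first dominate coefficientwise by an unrestricted product (using $|k_1|\sim|k_2|$ or $|k_i|\lesssim|k|$) and exploit that only $\ell^2$-type norms are monotone in the moduli of the coefficients --- fixable, but glossed over.

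For comparison, the paper does not decompose into frequency regions at all in this proof (that decomposition lives only inside the proof of Lemma \ref{newb}). It takes the single bound \eqref{P1e} and runs it through five different norm-level product estimates --- Lemma \ref{newb} (giving $p=1$ for all $r\ge1$), Lemma \ref{lemma3}(iv), HLS plus Sobolev embedding, Kato--Ponce \eqref{kp1}, and the bilinear estimate \eqref{kp} --- each valid on an explicit range of $(r,\alpha,\beta)$ with $\alpha+\beta=1$, and then defines $p(r)$ as the minimum of the resulting regularity requirements \eqref{32}, \eqref{32l}, \eqref{32r}, \eqref{locallemma3}. Your region-splitting plan is a legitimate alternative skeleton and could in principle yield a bound at least as strong (any smaller admissible exponent would still imply the stated inequality), but as written it is a strategy outline with the quantitative case analysis --- which is the lemma --- missing.
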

\begin{proof}
Firstly the $k$-th Fourier coefficient of $P_1^{\tau}(f)$ can be bounded as
\begin{align}\label{local1}
	\lvert\mathcal{F}_k&\left(P_1^\tau(f)\right)\rvert\notag\\
	&\lesssim \sum\limits_{k_1+k_2=k}\lvert k\rvert^{-2} \lvert k_1\rvert^{2} \left\lvert\int_0^{\tau}\big({\rm e}^{-2isk_2^2}-1\big)\left({\rm e}^{-2iskk_1}-1\right)ds\right\rvert \left\lvert \widehat{\overline{f}}_{k_1}\right\rvert \left\lvert \widehat{\overline{f}}_{k_2}\right\rvert\notag\\
	&\lesssim \tau \sum\limits_{k_1+k_2=k}\lvert k\rvert^{-2} \lvert k_1\rvert^{2} \sup\limits_{0\le s \le \tau}\big(\big\lvert\big({\rm e}^{-2isk_2^2}-1\big)\big\rvert \left\lvert\left({\rm e}^{-2iskk_1}-1\right)\right\rvert\big) \left\lvert \widehat{\overline{f}}_{k_1}\right\rvert \left\lvert \widehat{\overline{f}}_{k_2}\right\rvert\notag\\
	&\lesssim \tau^{1+\alpha+\beta} \sum\limits_{k_1+k_2=k}\lvert k\rvert^{-2} \lvert k_1\rvert^{2} \left\lvert k_2\right\rvert^{2\alpha}\left\lvert k \right\rvert^{\beta}\left\lvert k_1 \right\rvert^{\beta} \left\lvert \widehat{\overline{f}}_{k_1}\right\rvert \left\lvert \widehat{\overline{f}}_{k_2}\right\rvert\notag\\
	&\lesssim \tau^{1+\alpha+\beta} \lvert k\rvert^{-2+\beta}\sum\limits_{k_1+k_2=k} \lvert k_1\rvert^{2+\beta} \left\lvert k_2\right\rvert^{2\alpha} \left\lvert \widehat{\overline{f}}_{k_1}\right\rvert \left\lvert \widehat{\overline{f}}_{k_2}\right\rvert,
\end{align}
where $\alpha$, $\beta$ $\in$ $[0,1]$. This gives
\begin{align}
\left\|P_1^\tau(f)\right\|_r&\lesssim \Big\|\tau^{1+\alpha+\beta} \sum\limits_{k\not=0} \lvert k\rvert^{-2+\beta}\sum\limits_{k_1+k_2=k} \lvert k_1\rvert^{2+\beta} \left\lvert k_2\right\rvert^{2\alpha} \left\lvert \widehat{\overline{f}}_{k_1}\right\rvert \left\lvert \widehat{\overline{f}}_{k_2}\right\rvert{\rm e}^{ikx}\Big\|_{r}\notag\\
	&\lesssim \tau^{1+\alpha+\beta}\left\| \lvert\pa_x\rvert^{-2+\beta}\left[\left( \left\lvert \pa_x\right\rvert^{2+\beta}\widetilde{\ov{f}}\right) \left( \left\lvert \pa_x\right\rvert^{2\alpha}\widetilde{\ov{f}}\right)\right]\right\|_{r}.\label{P1e}
\end{align}

Now we give several estimates for $P_1^\tau(f)$ which might be valid in different regimes.

(1) For $r\ge 1$, setting $\alpha=1$ and $\beta=0$ in \eqref{P1e}, applying the inequalities in Lemma \ref{newb} yields
\be\label{locallemma5}
	\|P_1^\tau(f)\|_r\lesssim \tau^{2}\left\| \lvert\pa_x\rvert^{-2}\big[\big( \left\lvert \pa_x\right\rvert^{2}\widetilde{\ov{f}}\big) \big( \left\lvert \pa_x\right\rvert^{2}\widetilde{\ov{f}}\big)\big]\right\|_{r}\lesssim \tau^{2}\big\| \left\lvert \pa_x\right\rvert^{}\widetilde{\ov{f}}\big\|_{r}^2=\tau^2\|f\|^2_{r+1},
\ee
which implies a second-order local error by requiring one additional derivative.

(2) By applying Lemma \ref{lemma3} (iv), for $r+\beta-1>1/2$, we get
\begin{align*}
	\left\|P_1^\tau(f)\right\|_r&\lesssim \tau^{1+\alpha+\beta}\left\| \lvert\pa_x\rvert^{-1}\left[\left( \left\lvert \pa_x\right\rvert^{2+\beta}\widetilde{\ov{f}}\right) \left( \left\lvert \pa_x\right\rvert^{2\alpha}\widetilde{\ov{f}}\right)\right]\right\|_{r+\beta-1}\notag\\
	&\lesssim \tau^{1+\alpha+\beta}\left\| \left( \left\lvert \pa_x\right\rvert^{1+\beta}\widetilde{\ov{f}}\right)\right\|_{r+\beta-1} \left\| \left( \left\lvert \pa_x\right\rvert^{2\alpha}\widetilde{\ov{f}}\right)\right\|_{r+\beta-1}\notag\\
	&\lesssim \tau^{1+\alpha+\beta}\|f\|_{r+2\beta}\|f\|_{r+2\alpha+\beta-1}.
\end{align*}
To get a local error bound of order two, setting $\alpha+\beta=1$, one obtains
\begin{align}\label{locallemma3}
	\left\|P_1^\tau(f)\right\|_r\lesssim \tau^{2}\|f\|_{r+2\beta}^2, \quad \text{with} \quad\beta\in\left[1/3, 1/2\right],\quad r>3/2-\beta.
\end{align}
We clearly see that compared to the estimate in (1), this decreases the additional regularity required when $r>1$ and the least order of additional regularity can be decreased to $2/3$ which is valid when $r>7/6$. On the other hand, w observe that it is possible to require less additional regularity by choosing smaller $\beta$, however, we have to pay extra price that the error itself is estimated in a much more regular space by noticing the constraint $r>3/2-\beta$.

(3) When $r+\beta-2\in [0, 1/2)$, by applying Lemma \ref{sobolevcompact} and H{\" o}lder inequality, one gets
\begin{align*}
	\left\|P_1^\tau(f)\right\|_r&\lesssim \tau^{1+\alpha+\beta}\left\| \left( \left\lvert \pa_x\right\rvert^{2+\beta}\widetilde{\ov{f}}\right) \left( \left\lvert \pa_x\right\rvert^{2\alpha}\widetilde{\ov{f}}\right)\right\|_{r+\beta-2}\notag\\
	&\lesssim \tau^{1+\alpha+\beta}\left\| \left( \left\lvert \pa_x\right\rvert^{2+\beta}\widetilde{\ov{f}}\right) \left( \left\lvert \pa_x\right\rvert^{2\alpha}\widetilde{\ov{f}}\right)\right\|_{L^{\frac{2}{5-2r-2\beta}}}\notag\\
	&\lesssim \tau^{1+\alpha+\beta}\left\| \left( \left\lvert \pa_x\right\rvert^{2+\beta}\widetilde{\ov{f}}\right)\right\|_{L^{\frac{4}{5-2r-2\beta}}} \left\| \left( \left\lvert \pa_x\right\rvert^{2\alpha}\widetilde{\ov{f}}\right)\right\|_{L^{\frac{4}{5-2r-2\beta}}}\notag\\
	&\lesssim \tau^{1+\alpha+\beta}\left\| f\right\|_{\frac{2r+2\beta-3}{4}+2+\beta} \left\| f\right\|_{\frac{2r+2\beta-3}{4}+2\alpha}\notag\\
	&\lesssim \tau^{1+\alpha+\beta}\left\| f\right\|_{\frac{r}{2}+\frac{3}{2}\beta+\frac{5}{4}}^2.
\end{align*}
Similarly setting $\alpha+\beta=1$, one derives
\be\label{localcompact}
\left\|P_1^\tau(f)\right\|_r\lesssim \tau^{2}\left\| f\right\|_{\frac{r}{2}+\frac{3}{2}\beta+\frac{5}{4}}^2, \quad r\in (3/2-\beta, 2-\beta], \quad \beta \in [0,1].
\ee

(4) On the other hand, we can estimate $P_1^\tau(f)$ by employing the inequality \eqref{kp1} in Lemma \ref{lemma1}, by setting $\beta=0$, $\alpha=1$ in \eqref{local1},
\begin{align}\label{local3}
	\left\|P_1^\tau(f)\right\|_r&\lesssim \tau^{2}\left\| J^{r-2}\left( \left\lvert \pa_x\right\rvert^{2}\widetilde{\ov{f}}\right) \left( \left\lvert \pa_x\right\rvert^{2}\widetilde{\ov{f}}\right)\right\|_{}\notag\\
	&\lesssim \tau^{2}\left\| \left( J^r\widetilde{\ov{f}}\right)\right\|_{L^{p_1}} \left\| \left( \left\lvert \pa_x\right\rvert^{2}\widetilde{\ov{f}}\right)\right\|_{L^{p_2}},
\end{align}
where $2\le p_1<\infty$, $2< p_2\le \infty$ and $\frac{1}{p_1}+\frac{1}{p_2}=\frac{1}{2}$, when $r>2$. Applying the Sobolev embedding theorem in Lemma \ref{sobolevcompact}, we get
\[\left\|P_1^\tau(f)\right\|_r\lesssim \tau^{2}\|f\|_{r-\frac{1}{p_1}+\frac{1}{2}}\|f\|_{\frac{5}{2}-\frac{1}{p_2}},\quad 2<p_1, p_2<\infty.\]
To obtain a lower spatial regularity requirement, it is natural to choose $r-\frac{1}{p_1}+\frac{1}{2}=\frac{5}{2}-\frac{1}{p_2}$ for $\frac{1}{p_1}+\frac{1}{p_2}=\frac{1}{2}$ with $p_1\in(2, \infty)$ and $p_2\in(2, \infty)$, i.e., $p_1=\frac{1}{\frac{r}{2}-\frac{3}{4}}$ and $p_2=\frac{1}{\frac{5}{4}-\frac{r}{2}}$ with $r\in(2, 5/2)$, which yields the local error estimate as
\be\label{pes1}
\lVert P_1^{\tau}(f)\rVert_r\lesssim \tau^{2}\|f\|_{\frac{r}{2}+\frac{5}{4}}^2,\quad  r\in(2, 5/2).
\ee

(5) Finally, using the bilinear inequality \eqref{kp} in Lemma \ref{lemma1}, for $\alpha+\beta=1$, one has
\begin{align}\label{localp1}
	\left\|P_1^\tau(f)\right\|_r&\lesssim \tau^{1+\alpha+\beta}\left\| \left( \left\lvert \pa_x\right\rvert^{2+\beta}\widetilde{\ov{f}}\right) \left( \left\lvert \pa_x\right\rvert^{2\alpha}\widetilde{\ov{f}}\right)\right\|_{r+\beta-2}\notag\\
	&\lesssim \tau^{2} \left\| \left( \left\lvert \pa_x\right\rvert^{2+\beta}\widetilde{\ov{f}}\right)\right\|_{r+\beta-2} \left\| \left( \left\lvert \pa_x\right\rvert^{2\alpha}\widetilde{\ov{f}}\right)\right\|_{r+\beta-2}\notag\\
	&\lesssim \tau^{2}\|f\|_{r+2\beta}\|f\|_{r+2\alpha+\beta-2}\notag\\
&\lesssim \tau^{2}\|f\|^2_{r+2\beta},
\end{align}
for $r>5/2-\beta$ with $0\le\beta\le 1$. This implies an error without loss of regularity when $r>5/2$ by choosing $\beta=0$.

Taking $\beta=0$ in \eqref{localcompact}, one gets
\[\left\|P_1^\tau(f)\right\|_r\lesssim \tau^{2}\left\| f\right\|_{\frac{r}{2}+\frac{5}{4}}^2, \quad r\in (3/2, 2],\]
which combines with \eqref{pes1} and \eqref{localp1} gives
\be\label{32}
\begin{split}
&\left\|P_1^\tau(f)\right\|_r\lesssim \tau^{2}\left\| f\right\|_{\frac{r}{2}+\frac{5}{4}}^2, \quad r\in (3/2, 5/2);\\
&\left\|P_1^\tau(f)\right\|_r\lesssim \tau^{2}\left\| f\right\|_{r+}^2, \quad r=5/2;\\
&\left\|P_1^\tau(f)\right\|_r\lesssim \tau^{2}\left\| f\right\|_{r}^2, \quad r> 5/2.
\end{split}
\ee
For $r\le \frac32$, by \eqref{localcompact}, we have to choose $\beta=(\frac32-r)+$, i.e., $\beta=\frac32-r+\varepsilon$ with any sufficiently small $\varepsilon>0$ which reads as
\be\label{32l}
\left\|P_1^\tau(f)\right\|_r\lesssim \tau^{2}\left\| f\right\|_{\frac{r}{2}+\frac{5}{4}+\frac{3}{2}(\frac{3}{2}-r)+}^2=\left\| f\right\|_{(\frac{7}{2}-r)+}^2, \quad r \in [1, 3/2].\ee
Similarly \eqref{locallemma3} equivalents to the estimate
\be\label{32r}
\left\|P_1^\tau(f)\right\|_r\lesssim \tau^{2}\left\| f\right\|_{(3-r)+}^2,\quad r\in (1, \frac76];\quad
\left\|P_1^\tau(f)\right\|_r\lesssim \tau^{2}\left\| f\right\|_{r+\frac23}^2, \quad r>\frac76.\ee
Lemma \ref{P1es} is concluded by taking the minimum of the required order of regularity for \eqref{32}, \eqref{32l}, \eqref{32r} and \eqref{locallemma3}.
\end{proof}

Concerning the term $P_2^{\tau}(f)$, we have the following estimate.

\begin{lemma}\label{P2es}
For $r\ge 1$, we have
\[\|P_2^\tau(f)\|_r\lesssim \tau^2\|f\|_{r+q(r)}^2,\]
where
\[q(r)=
\left\{
\begin{aligned}
&5/4-r/2,\quad &1\le r\le 5/2;\\
&0,\quad &r>5/2.
\end{aligned}\right.\]
\end{lemma}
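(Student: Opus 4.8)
The plan is to follow the proof of Lemma~\ref{P1es} almost line for line; the only structural difference is that the multiplier $\tfrac{2k_1k_2}{k^2}$ appearing in $P_2^\tau$ is milder than the multiplier $\tfrac{k_1^2}{k^2}$ of $P_1^\tau$, which is precisely why the regularity demand drops from $p(r)$ to $q(r)$.

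\textbf{Reduction to a bilinear estimate.} First I would bound $\lv\mathcal{F}_k(P_2^\tau(f))\rv$ exactly as in \eqref{local1}: estimate $\int_0^\tau ds$ by $\tau$ and apply Lemma~\ref{lemma2}(i) to the two factors, using the exponent $\alpha\in[0,1]$ on ${\rm e}^{-2isk^2}-1$ (giving $\tau^\alpha\lv k\rv^{2\alpha}$) and the exponent $\beta\in[0,1]$ on ${\rm e}^{2isk_1k_2}-1$ (giving $\tau^\beta\lv k_1\rv^\beta\lv k_2\rv^\beta$). This produces
\[
\lv\mathcal{F}_k(P_2^\tau(f))\rv\lesssim \tau^{1+\alpha+\beta}\,\lv k\rv^{2\alpha-2}\sum_{k_1+k_2=k}\lv k_1\rv^{1+\beta}\lv k_2\rv^{1+\beta}\,\big\lv\widehat{\ov{f}}_{k_1}\big\rv\big\lv\widehat{\ov{f}}_{k_2}\big\rv .
\]
Setting $\alpha+\beta=1$ (so the prefactor is $\tau^2$ and $\lv k\rv^{2\alpha-2}=\lv k\rv^{-2\beta}$) and using $\big\||\pa_x|^{-\gamma}g\big\|_r\lesssim\|g\|_{r-\gamma}$ for $\gamma\ge0$, I arrive, for every $\beta\in[0,1]$, at
\[
\|P_2^\tau(f)\|_r\lesssim \tau^2\big\||\pa_x|^{-2\beta}\big[\big(|\pa_x|^{1+\beta}\widetilde{\ov{f}}\,\big)^2\big]\big\|_r\lesssim \tau^2\big\|\big(|\pa_x|^{1+\beta}\widetilde{\ov{f}}\,\big)^2\big\|_{r-2\beta}.
\]
Writing $h:=|\pa_x|^{1+\beta}\widetilde{\ov{f}}$, which satisfies $\|h\|_\sigma\lesssim\|f\|_{\sigma+1+\beta}$ for $\sigma\ge0$, it remains to choose $\beta=\beta(r)\in[0,1]$ and to bound $\|h^2\|_{r-2\beta}$ by $\|f\|_{r+q(r)}^2$.

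\textbf{Choice of $\beta$ by regimes.} For $r>5/2$ I would take $\beta=1$, so that $r-2\beta=r-2>\tfrac12$; the bilinear Kato--Ponce inequality \eqref{kp} then gives $\|h^2\|_{r-2}\lesssim\|h\|_{r-2}^2\lesssim\|f\|_r^2$, i.e. $q(r)=0$. For $1\le r\le2$ I would take $\beta=r/2\in[\tfrac12,1]$ (hence $\alpha=1-r/2\in[0,\tfrac12]$), so that $r-2\beta=0$ and $\|h^2\|_0=\|h\|_{L^4}^2$; the endpoint Sobolev embedding $H^{1/4}(\mathbb{T})\hookrightarrow L^4(\mathbb{T})$ from Lemma~\ref{sobolevcompact}(ii) then gives $\|h\|_{L^4}^2\lesssim\|h\|_{1/4}^2\lesssim\|f\|_{5/4+r/2}^2=\|f\|_{r+q(r)}^2$. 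For $2<r<5/2$ I would again take $\beta=1$, so that $r-2\beta=r-2\in(0,\tfrac12)$, and apply the $L^p$ Kato--Ponce inequality \eqref{kp1} (with $p=2$), placing $J^{r-2}$ on one factor and then using Lemma~\ref{sobolevcompact}(ii) on each factor with the Lebesgue exponents $\tfrac1{p_1}=\tfrac r2-\tfrac34$, $\tfrac1{p_2}=\tfrac54-\tfrac r2$ (both lie in $(0,\tfrac12)$ for $r\in(2,\tfrac52)$, so $p_1,p_2\in(2,\infty)$ and $\tfrac1{p_1}+\tfrac1{p_2}=\tfrac12$); this lands both factors in $H^{r/2-3/4}$ and yields $\|h^2\|_{r-2}\lesssim\|f\|_{r/2+5/4}^2=\|f\|_{r+q(r)}^2$. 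Collecting the cases proves the lemma for all $r\in[1,\tfrac52)\cup(\tfrac52,\infty)$; at the threshold $r=5/2$ the relevant inequality sits exactly at its endpoint (here $r-2\beta$ can reach only the value $\tfrac12$, excluded from \eqref{kp} and forcing $p_1=2$ in \eqref{kp1}), and a standard $\varepsilon$‑argument gives $q(5/2)=0+$, which is immaterial since $p(r)$ already carries a $+$ at $r=5/2$ in Theorem~\ref{1orderth1}.

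\textbf{Main obstacle.} No new idea beyond cases (3)--(5) in the proof of Lemma~\ref{P1es} is required; the only delicate point is the bookkeeping of admissible exponents. One must verify that for each $r$ in the relevant range there is a pair $(\alpha,\beta)$ with $\alpha,\beta\in[0,1]$ and $\alpha+\beta=1$ for which $r-2\beta$ falls into the regime where the invoked inequality applies, and that the accompanying Lebesgue indices $p_1,p_2$ lie in $(2,\infty)$ with the corresponding Sobolev indices in $(0,\tfrac12)$, so that Lemma~\ref{sobolevcompact}(ii) is legitimate. Checking that all these constraints can be satisfied simultaneously is exactly what pins down the formula for $q(r)$.
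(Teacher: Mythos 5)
Your proposal is correct and follows essentially the same route as the paper's proof: the same Fourier-coefficient bound with $\alpha+\beta=1$, then H\"older plus Sobolev embedding for $1\le r\le 2$, the Kato--Ponce inequality \eqref{kp1} with the indices $1/p_1=r/2-3/4$, $1/p_2=5/4-r/2$ for $2<r<5/2$, and the bilinear estimate \eqref{kp} for $r>5/2$. The only cosmetic deviations are that you fix $\beta=r/2$ so that only the endpoint embedding $H^{1/4}(\mathbb{T})\subseteq L^4(\mathbb{T})$ is needed (the paper keeps $\alpha$ free and invokes the Hardy--Littlewood--Sobolev lemma before H\"older), and that at the single point $r=5/2$ you concede an $\varepsilon$-loss, which is harmless since $p(5/2)=0+$ in Theorem \ref{1orderth1} and the paper's own treatment of that endpoint is no sharper.
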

\begin{proof}
It follows from \eqref{t3} that
\begin{align}\label{local2}
	\lvert&\mathcal{F}_k\left(P_2^\tau(f)\right)\rvert\notag\\
	&\lesssim \sum\limits_{k_1+k_2=k}\lvert k\rvert^{-2} \lvert k_1\rvert^{}\lvert k_2\rvert^{} \left\lvert\int_0^{\tau}\big({\rm e}^{-2isk^2}-1\big)\left({\rm e}^{2isk_1k_2}-1\right)ds\right\rvert \left\lvert \widehat{\overline{f}}_{k_1}\right\rvert \left\lvert \widehat{\overline{f}}_{k_2}\right\rvert\notag\\
	&\lesssim \tau \sum\limits_{k_1+k_2=k}\lvert k\rvert^{-2} \lvert k_1\rvert^{}\lvert k_2\rvert^{} \sup\limits_{0\le s\le \tau}\big(\big\lvert\big({\rm e}^{-2isk^2}-1\big)\big\rvert \left\lvert\left({\rm e}^{2isk_1k_2}-1\right)\right\rvert\big) \left\lvert \widehat{\overline{f}}_{k_1}\right\rvert \left\lvert \widehat{\overline{f}}_{k_2}\right\rvert\notag\\
	&\lesssim \tau^{1+\alpha+\beta} \sum\limits_{k_1+k_2=k}\lvert k\rvert^{-2} \lvert k_1\rvert^{}\lvert k_2\rvert^{} \left\lvert k \right\rvert^{2\alpha}\left\lvert k_1 \right\rvert^{\beta} \left\lvert k_2 \right\rvert^{\beta}\left\lvert \widehat{\overline{f}}_{k_1}\right\rvert \left\lvert \widehat{\overline{f}}_{k_2}\right\rvert\notag\\
	&\lesssim \tau^{2} \lvert k\rvert^{-2+2\alpha}\sum\limits_{k_1+k_2=k} \lvert k_1\rvert^{1+\beta} \left\lvert k_2\right\rvert^{1+\beta} \left\lvert \widehat{\overline{f}}_{k_1}\right\rvert \left\lvert \widehat{\overline{f}}_{k_2}\right\rvert,
\end{align}
for $\alpha, \beta\in[0,1]$ satisfying $\alpha+\beta=1$.
Using similar approach applied in the proof of Lemma \ref{P1es}, we establish several estimates by applying various tools.

(1) When $r+2\alpha-2\in (-1/2, 0]$, by applying Lemma \ref{sobolevcompact} and H{\" o}lder inequality, one gets
\begin{align*}
	\left\|P_2^\tau(f)\right\|_r&\lesssim \tau^{2} \left\| \left( \left\lvert \pa_x\right\rvert^{1+\beta}\widetilde{\ov{f}}\right) \left( \left\lvert \pa_x\right\rvert^{1+\beta}\widetilde{\ov{f}}\right)\right\|_{r+2\alpha-2}\notag\\
	&\lesssim \tau^{2} \left\| \left( \left\lvert \pa_x\right\rvert^{1+\beta}\widetilde{\ov{f}}\right) \left( \left\lvert \pa_x\right\rvert^{1+\beta}\widetilde{\ov{f}}\right)\right\|_{L^{\frac{2}{5-2r-4\alpha}}}\notag\\
	&\lesssim\tau^{2}  \left\| \left( \left\lvert \pa_x\right\rvert^{1+\beta}\widetilde{\ov{f}}\right)\right\|_{L^{\frac{4}{5-2r-4\alpha}}}^2\notag\\
	&\lesssim\tau^{2}  \left\| f\right\|_{\frac{r}{2}+\frac{5}{4}}^2.
\end{align*}
Noticing the constraint $r\in (3/2-2\alpha, 2-2\alpha]$ and $\alpha\in [0, 1]$, we immediately get
\be\label{localcompactp2}
\left\|P_2^\tau(f)\right\|_r\lesssim\tau^{2}  \left\| f\right\|_{\frac{r}{2}+\frac{5}{4}}^2,\quad  r\in [1, 2].
\ee

(2) Setting $\alpha=0$ and $\beta=1$, one gets
\[\left\|P_2^\tau(f)\right\|_r\lesssim \tau^{2}\left\| |\pa_x|^{-2}\left( \left\lvert \pa_x\right\rvert^{2}\widetilde{\ov{f}}\right) \left( \left\lvert \pa_x\right\rvert^{2}\widetilde{\ov{f}}\right)\right\|_{r}\lesssim \tau^{2}\left\| J^{r-2}\left( \left\lvert \pa_x\right\rvert^{2}\widetilde{\ov{f}}\right) \left( \left\lvert \pa_x\right\rvert^{2}\widetilde{\ov{f}}\right)\right\|,\]
which is exactly the same as in \eqref{local3}. Hence \eqref{pes1} also holds for $P_2^\tau(f)$, i.e.,
\be\label{p2f}
\lVert P_2^{\tau}(f)\rVert_r\lesssim \tau^{2}\|f\|_{\frac{r}{2}+\frac{5}{4}}^2,\quad  r\in(2, 5/2].
\ee

(3) It remains to give a bound of $\|P_2^\tau(f)\|_r$ for $r>5/2$. Employing the bilinear estimate \eqref{kp}, one easily gets
\begin{align*}
	\left\|P_2^\tau(f)\right\|_r&\lesssim \tau^{2} \left\| \left( \left\lvert \pa_x\right\rvert^{1+\beta}\widetilde{\ov{f}}\right) \left( \left\lvert \pa_x\right\rvert^{1+\beta}\widetilde{\ov{f}}\right)\right\|_{r+2\alpha-2}\notag\\
	&\lesssim\tau^{2}  \left\| \left( \left\lvert \pa_x\right\rvert^{1+\beta}\widetilde{\ov{f}}\right)\right\|_{r+2\alpha-2}^2\notag\\
	&\lesssim \tau^{2}  \|f\|^2_{r+\alpha},\quad \mathrm{for}\quad r>5/2-2\alpha,
\end{align*}
which implies
\[\left\|P_2^\tau(f)\right\|_r\lesssim \tau^{2}\|f\|^2_{r},\quad \mathrm{for}\quad r>5/2;\quad \left\|P_2^\tau(f)\right\|_r\lesssim \tau^{2}\|f\|^2_{(\frac{r}{2}+\frac{5}{4})+},\quad r\in [1, 5/2].\]
This together with \eqref{localcompactp2} and \eqref{p2f} concludes Lemma \ref{P2es}.
\end{proof}

It is easy to see $q(r)\le p(r)$ by direct computations. In spirit of \eqref{firstorderapp}, Lemma \ref{P1es} and Lemma \ref{P2es}, we obtain the local error of the scheme \eqref{sch2}.

\begin{lemma}\label{localerror}
Suppose $r\ge 1$, $u\in L^\infty(0, T; H^{r+p(r)})$. Then we have
\[\|u(t_n+\tau)-\Psi_1^{\tau}(u(t_n))\|_r\le L \tau^2,\]
where $L$ depends on $\|u\|_{L^\infty(0, T; H^{r+p(r)})}$.
\end{lemma}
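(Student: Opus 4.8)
The plan is to read the local error straight off the exact identity \eqref{firstorderapp}, which already separates $\Psi_1^\tau(u(t_n))$ from the three error contributions produced during the construction of the scheme: the two remainder integrals $-\tfrac{i}{2}B^\tau P_1^\tau(u(t_n))$ and $-\tfrac{i}{4}B^\tau P_2^\tau(u(t_n))$ that were discarded, and the generic remainder $\mathcal{R}_0(\tau^2)$ accumulated between \eqref{duhamel} and \eqref{uapp}. By the triangle inequality it is then enough to bound each of these three pieces in $H^r$ by $C\tau^2$ with $C$ controlled by $\|u\|_{L^\infty(0,T;H^{r+p(r)})}$.

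First I would dispose of the two $P_j$ terms. Since $B^\tau(f)={\rm e}^{i\tau\langle\partial_x^2\rangle}Bf$ up to commuting Fourier multipliers, and ${\rm e}^{i\tau\langle\partial_x^2\rangle}$ is an isometry on $H^r$ while $\|Bf\|_r\le\|f\|_r$ by Lemma \ref{lemma3}(i), one has $\|B^\tau g\|_r\le\|g\|_r$. Hence, applying Lemma \ref{P1es} and Lemma \ref{P2es},
\begin{align*}
\big\|\tfrac{i}{2}B^\tau P_1^\tau(u(t_n))\big\|_r &\lesssim \|P_1^\tau(u(t_n))\|_r \lesssim \tau^2\|u(t_n)\|_{r+p(r)}^2,\\
\big\|\tfrac{i}{4}B^\tau P_2^\tau(u(t_n))\big\|_r &\lesssim \|P_2^\tau(u(t_n))\|_r \lesssim \tau^2\|u(t_n)\|_{r+q(r)}^2 \le \tau^2\|u(t_n)\|_{r+p(r)}^2,
\end{align*}
where the last inequality uses $q(r)\le p(r)$ (noted just before the statement of this lemma). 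Both right-hand sides are $\le C\tau^2$ with $C$ depending only on $\|u\|_{L^\infty(0,T;H^{r+p(r)})}$.

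It then remains to handle $\mathcal{R}_0(\tau^2)$. By the very definition of the class $\mathcal{R}_0(\tau^2)$, its $H^r$-norm is $\le C\tau^2$ with $C$ depending only on $\sup_{0\le s\le\tau}\|u(t_n+s)\|_r$, hence a fortiori on $\|u\|_{L^\infty(0,T;H^r)}\le\|u\|_{L^\infty(0,T;H^{r+p(r)})}$ since $p(r)\ge 0$; collecting the three bounds and taking $L$ to be the sum of the implied constants gives the claim. The only genuinely delicate point is the bookkeeping behind $\mathcal{R}_0(\tau^2)$: one must check that every term dropped in passing from \eqref{duhamel} to \eqref{uapp} is quadratic in $\tau$ in $H^r$ without any regularity beyond $H^r$. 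This splits into the error of replacing ${\rm e}^{i(\tau-s)A}$ by ${\rm e}^{i\tau A}$ inside the integral, controlled by $\|({\rm e}^{-isA}-1)f\|_r\le s\|f\|_r$ from Lemma \ref{lemma3}(i), and the error of freezing $w(t_n+s)$ at $w(t_n)$, controlled by the first-order bound \eqref{l5}; both enter through the $H^r$-bounded operator $B$ and through bilinear products handled by the Kato--Ponce inequality \eqref{kp}, which applies because $r\ge 1>1/2$, and the extra $\int_0^\tau ds$ upgrades each $O(s)$ factor to $O(\tau^2)$. Since the substantive work was already carried out in Lemmas \ref{P1es} and \ref{P2es}, this lemma is essentially an assembly step, and no obstacle beyond that routine verification is expected.
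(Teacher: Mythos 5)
Your proposal is correct and follows essentially the same route as the paper, which states the lemma as an immediate consequence of the decomposition \eqref{firstorderapp}, the bounds of Lemmas \ref{P1es} and \ref{P2es} together with $q(r)\le p(r)$, the $H^r$-boundedness of $B^{\tau}$ from Lemma \ref{lemma3}(i), and the definition of the class $\mathcal{R}_0(\tau^2)$ (whose constant depends only on $\sup_{0\le s\le\tau}\|u(t_n+s)\|_r$). Your additional bookkeeping of how $\mathcal{R}_0(\tau^2)$ arises between \eqref{duhamel} and \eqref{uapp}, via \eqref{l5} and $\|({\rm e}^{itA}-1)f\|_r\le|t|\,\|f\|_r$, is exactly the verification the paper leaves implicit in its construction in Section \ref{3}.
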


\subsection{Stability}
\begin{lemma}\label{sta}
	Suppose $r\ge 1$ and $f,g$ $\in$ $H^r$. Then for $\tau> 0$, we have
	\be\label{stab1}
	\lVert \Psi_1^{\tau} (f)-\Psi_1^{\tau} (g)\rVert_r \leq  (1+M\tau)\lVert f-g\rVert_r,
	\ee
	where $M$ depends on $r$ and $\lVert f\rVert_r+\lVert g\rVert_r$.
\end{lemma}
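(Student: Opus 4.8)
The plan is to isolate the linear propagator ${\rm e}^{i\tau\langle\partial_x^2\rangle}$, which is an $H^r$-isometry and produces the ``$1$'' in $(1+M\tau)$, and to show that every remaining contribution to $\Psi_1^{\tau}(f)-\Psi_1^{\tau}(g)$ is bounded in $H^r$ by $C\tau\lVert f-g\rVert_r$ with $C$ depending only on $r$, on $\lVert f\rVert_r+\lVert g\rVert_r$, and on $\sup_n\lvert at_n+b\rvert\lesssim\lvert a\rvert T+\lvert b\rvert$. Writing $\Psi_1^{\tau}$ in the form \eqref{sch2}, I would first invoke $\lVert B^{\tau}h\rVert_r\le\lVert h\rVert_r$ (a consequence of Lemma \ref{lemma3}(i) together with the fact that ${\rm e}^{i\tau\langle\partial_x^2\rangle}$ is an $H^r$-isometry) and $\lVert\psi_1(it\partial_x^2)h\rVert_r\le\lVert h\rVert_r$ (Lemma \ref{lemma2}(ii)) to reduce the whole estimate to: (a) the linear term $\tau(at_n+b)B^{\tau}\bigl((f-g)+\psi_1(2i\tau\partial_x^2)\overline{f-g}\bigr)$, which is immediately $\lesssim\tau\lVert f-g\rVert_r$; and (b) the six bilinear differences $L_j^{\tau}(f)-L_j^{\tau}(g)$, $j=1,\dots,4$, and $I_j^{\tau}(f)-I_j^{\tau}(g)$, $j=1,2$.

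Each $L_j^{\tau}$ and $I_j^{\tau}$ is a (conjugate\nobreakdash-)bilinear map, so $Q^{\tau}(f)-Q^{\tau}(g)=\widetilde Q^{\tau}(f-g,f)+\widetilde Q^{\tau}(g,f-g)$, and it suffices to prove $\lVert\widetilde Q^{\tau}(h_1,h_2)\rVert_r\lesssim\tau\lVert h_1\rVert_r\lVert h_2\rVert_r$ for each associated bilinear operator. For $I_1^{\tau}$ and $I_2^{\tau}$ I would return to the integral forms in \eqref{BL0}--\eqref{L12}, namely $I_1^{\tau}(f)=\int_0^{\tau}{\rm e}^{is\partial_x^2}({\rm e}^{-is\partial_x^2}f)^2\,ds$ and $I_2^{\tau}(f)=\int_0^{\tau}{\rm e}^{is\partial_x^2}\lvert{\rm e}^{-is\partial_x^2}f\rvert^2\,ds$; since ${\rm e}^{is\partial_x^2}$ is an $H^r$-isometry and $r\ge1>1/2$, applying the Kato--Ponce bound \eqref{kp} inside the $s$-integral gives the $\tau$-small estimate at once. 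For the $L_j^{\tau}$ the key point is that each carries an $O(\tau)$ gain: expanding the $k$-th Fourier coefficient exactly as in the proofs of Lemmas \ref{P1es}--\ref{P2es}, the relevant multiplier is always of one of the forms $\int_0^{\tau}{\rm e}^{-2isk_2^2}ds$, $({\rm e}^{-2i\tau kk_1}-1)/(kk_1)$, $\tau$, or $({\rm e}^{\pm2i\tau\partial_x^2}-1)$ composed with $\partial_x^{-2}$ or $\partial_x^{-4}$, and in every case Lemma \ref{lemma2}(i) (i.e.\ $\lvert{\rm e}^{ix}-1\rvert\le\lvert x\rvert$ and $\bigl\lvert\int_0^{\tau}{\rm e}^{i\theta(s)}ds\bigr\rvert\le\tau$) lets one pull out a factor $\tau$ leaving a multiplier dominated by $\lvert k\rvert^{-2}\lvert k_1\rvert^{a}\lvert k_2\rvert^{b}$ with $a+b=2$, $a,b\ge0$. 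Hence $\lVert\widetilde L_j^{\tau}(h_1,h_2)\rVert_r\lesssim\tau\,\bigl\lVert\,\lvert\partial_x\rvert^{-2}\bigl[(\lvert\partial_x\rvert^{a}\widetilde{h_1})(\lvert\partial_x\rvert^{b}\widetilde{h_2})\bigr]\bigr\rVert_r$, which for $\{a,b\}=\{1,1\}$ is exactly Lemma \ref{newb} and for $\{a,b\}=\{0,2\}$ follows from the very same high--low / high--high frequency dichotomy used to prove Lemma \ref{newb} (Kato--Ponce \eqref{kp} on the diagonal piece $\lvert k_1\rvert\lesssim\lvert k\rvert$, and summability of $(\lvert k\rvert^{-r})_{k\ne0}$ in $\ell^2$ on the high--high-to-low piece), both valid precisely because $r\ge1$. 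Summing the finitely many contributions and adding back the isometry ${\rm e}^{i\tau\langle\partial_x^2\rangle}(f-g)$ gives \eqref{stab1}.

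The main obstacle is this low-regularity bilinear bound in the ``balanced but unbalanced'' cases — those reducing to $\lVert\,\lvert\partial_x\rvert^{-2}[(\lvert\partial_x\rvert^2 g)h]\rVert_r$ (from $L_1^{\tau}$ and $L_2^{\tau}$) and $\lVert\,\lvert\partial_x\rvert^{-2}[(\lvert\partial_x\rvert g)(\lvert\partial_x\rvert h)]\rVert_r$ (from $L_3^{\tau}$ and $L_4^{\tau}$) — where the naive product rule $\lVert fg\rVert_r\lesssim\lVert f\rVert_r\lVert g\rVert_r$ is useless because there are two extra derivatives inside, and one must exploit the $\partial_x^{-2}$ in front through the frequency splitting of Lemma \ref{newb}; this argument breaks down for $r<1$, which is exactly why the hypothesis $r\ge1$ is imposed here (and hence the corresponding restriction in Theorem \ref{1orderth1}). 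The remaining work — matching each summand of \eqref{sch0} with a term of \eqref{sch2}, bookkeeping the conjugations and the ${\rm e}^{\pm is\partial_x^2}$ factors, and isolating the harmless zero-mode contributions (which $B^{\tau}$ annihilates anyway) — is routine.
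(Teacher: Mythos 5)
Your proposal is correct, and its skeleton coincides with the paper's: isolate the $H^r$-isometry ${\rm e}^{i\tau\langle\partial_x^2\rangle}$, bound $B^{\tau}$ and $\psi_1(2i\tau\partial_x^2)$ by one, obtain the $O(\tau)$ Lipschitz bound for $I_1^{\tau},I_2^{\tau}$ by applying Kato--Ponce \eqref{kp} inside the time integral, and locate the whole difficulty (and the hypothesis $r\ge1$) in the bilinear terms with unbalanced derivatives smoothed by $\partial_x^{-2}$. Where you genuinely differ is in how those terms are organized and estimated. The paper does not bound $L_1^{\tau},\dots,L_4^{\tau}$ separately: it recombines them through the identity $W^{\tau}=I_0^{\tau}-2P_1^{\tau}-P_2^{\tau}$ (see \eqref{stability3}), estimates $I_0^{\tau}(f)-I_0^{\tau}(g)$ by Kato--Ponce exactly as for $I_1^{\tau}$, and is then left only with Lipschitz bounds for $P_1^{\tau}$ and $P_2^{\tau}$; for the $(2,0)$-type multiplier $k^{-2}k_1^2$ in $P_1^{\tau}$ it uses the algebraic splitting $k^{-2}k_1^2=k^{-1}k_1-k^{-2}k_1k_2$ (the ``modified Newton--Leibniz'' formula), which reduces everything to the already proved estimates \eqref{l2444} and \eqref{l243}, so no new bilinear inequality is required. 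You instead bilinearize each $L_j^{\tau}$ difference directly from its time-integral representation --- correctly so, since the $\tau$-gain is visible only there and not in the closed-form pieces of \eqref{i1}--\eqref{i4} taken separately --- and you therefore need a $(2,0)$ analogue of Lemma \ref{newb}, namely $\lVert\,\lvert\partial_x\rvert^{-2}[(\lvert\partial_x\rvert^{2}h_1)h_2]\rVert_r\lesssim\lVert h_1\rVert_r\lVert h_2\rVert_r$. Your claim that this follows by rerunning the frequency dichotomy is right: for $\lvert k_1\rvert\le2\lvert k\rvert$ one has $\lvert k\rvert^{r-2}\lvert k_1\rvert^{2}\lesssim\lvert k\rvert^{r}$ and Kato--Ponce applies, while for $\lvert k_1\rvert>2\lvert k\rvert$ one has $\lvert k_1\rvert\sim\lvert k_2\rvert$, hence $\lvert k\rvert^{r-2}\lvert k_1\rvert^{2}\lesssim\lvert k\rvert^{-r}\lvert k_1\rvert^{r}\lvert k_2\rvert^{r}$ for $r\ge1$, and the $\ell^2$-summability of $\lvert k\rvert^{-r}$ closes the bound; alternatively you could avoid proving anything new by borrowing the paper's Newton--Leibniz identity. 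Two cosmetic remarks: the summability step only needs $r>1/2$ (it is the redistribution of derivatives that needs $r\ge1$), and, as you note, $M$ also depends on $\lvert a\rvert T+\lvert b\rvert$, a dependence that is implicit in the paper's statement as well.
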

\begin{proof}
	To begin with, by using \eqref{kp}, \eqref{BL0} and \eqref{L12}, one can easily check that
	\begin{align}\label{stability1}
		\|I_{1}^{\tau}(f)-I_{1}^{\tau}(g)\|_r &\le C \tau \mathop{{\rm sup}}\limits_{\tau\ge s\ge 0}\left\|e^{is\pa_x^2}\left[(e^{-is\pa_x^2}f)^2-(e^{-is\pa_x^2}g)^2\right]\right\|_r\notag\\
		&\le C  \tau \mathop{{\rm sup}}\limits_{\tau\ge s\ge 0}\|e^{-is\pa_x^2}(f+g)\|_r\|e^{-is\pa_x^2}(f-g)\|_r\notag\\
		&\le C\tau (\|f\|_r+\|g\|_r)\|f-g\|_r.
	\end{align}
	Similar discussions for $I_1^{\tau}(f)$ and $I_2^{\tau}(f)$ yield that
	\begin{align}\label{stability2}
		\|I_{2}^{\tau}(f)-I_{2}^{\tau}(g)\|_r&\le C \tau(\|f\|_r+\|g\|_r)\|f-g\|_r,\notag\\
		\|I_{0}^{\tau}(f)-I_{0}^{\tau}(g)\|_r&\le C \tau(\|f\|_r+\|g\|_r)\|f-g\|_r.
	\end{align}
	Noticing the decomposition of $T_i^{\tau}(f)$ ($i=1,2,3$) in \eqref{t0}, \eqref{t1} and \eqref{t3}, we have
	\begin{align}\label{stability3}
		I_0^{\tau}(f)=T_0^{\tau}(f)+2L_1^{\tau}(f)+2L_2^{\tau}(f)+2P_1^{\tau}(f)+L_3^{\tau}(f)+L_4^{\tau}(f)+P_2^{\tau}(f),
	\end{align}
	which yields
	\begin{align}\label{stability4}
		\|W^\tau&(f)-W^\tau(g)\|_r= \|I_0^{\tau}(f)-I_0^{\tau}(g)-2P_1^{\tau}(f)+2P_1^{\tau}(g)-P_2^{\tau}(f)+P_2^{\tau}(g)\|_r\notag\\
		&\le\|I_0^{\tau}(f)-I_0^{\tau}(g)\|_r+2\|P_1^{\tau}(f)-P_1^{\tau}(g)\|_r+\|P_2^{\tau}(f)-P_2^{\tau}(g)\|_r,
	\end{align}
	where $W^{\tau}(f):=T_0^{\tau}(f)+2L_1^{\tau}(f)+2L_2^{\tau}(f)+L_3^{\tau}(f)+L_4^{\tau}(f)$.
	
	It remains to deal with the terms $P_1^{\tau}(f)$ and $P_2^{\tau}(f)$. According to Lemmas \ref{lemma1}, \ref{lemma3}, \ref{newb} and the definition of $P_1^{\tau}$ in \eqref{t1}, for $r\ge 1$, we have
	\begin{align}\label{stability5}
		&\|P_1^{\tau}(f)-P_1^{\tau}(g)\|_r\notag\\
		&\lesssim		\Big\|\sum\limits_{k\not=0}\sum\limits_{k_1+k_2=k}\frac{k_1^2}{k^2}\int_0^{\tau}\big({\rm e}^{-2isk_2^2}-1\big)\left({\rm e}^{-2iskk_1}-1\right)ds \big(\widehat{\overline{f}}_{k_1}\widehat{\overline{f}}_{k_2}-\widehat{\overline{g}}_{k_1}\widehat{\overline{g}}_{k_2}\big){\rm e}^{ikx}\Big\|_r\notag\\
		&\lesssim
		\tau\Big\|\sum\limits_{k\not=0}\sum\limits_{k_1+k_2=k}k^{-2}k_1^2 \left\lv\widehat{\overline{f}}_{k_1}\widehat{\overline{f}}_{k_2}-\widehat{\overline{f}}_{k_1}\widehat{\overline{g}}_{k_2}+\widehat{\overline{f}}_{k_1}\widehat{\overline{g}}_{k_2}-\widehat{\overline{g}}_{k_1}\widehat{\overline{g}}_{k_2}\right\rv{\rm e}^{ikx}\Big\|_r\notag\\
		&\lesssim
		\tau\Big\|\sum\limits_{k\not=0}\sum\limits_{k_1+k_2=k}k^{-2}k_1^2 \left\lv\widehat{\overline{f}}_{k_1}\right\rv\left\lv\widehat{\overline{f}}_{k_2}-\widehat{\overline{g}}_{k_2}\right\rv+\left\lv\widehat{\overline{f}}_{k_1}-\widehat{\overline{g}}_{k_1}\right\rv\left\lv\widehat{\overline{g}}_{k_2}\right\rv{\rm e}^{ikx}\Big\|_r\notag\\
		&\lesssim
		\tau\left(\left\|\pa_x^{-2}\left[(\pa_x^2\widetilde{\ov{f}})(\widetilde{\ov{f}-\ov{g}})\right]\right\|_r+\left\|\pa_x^{-2}\left[\pa_x^2(\widetilde{\ov{f}-\ov{g}})\widetilde{\ov{g}}\right]\right\|_r\right)\notag\\
		&=	\tau\Big(\left\|\pa_x^{-1}\left[(\pa_x\widetilde{\ov{f}})(\widetilde{\ov{f}-\ov{g}})\right]-\pa_x^{-2}\left[(\pa_x\widetilde{\ov{f}})\pa_x(\widetilde{\ov{f}-\ov{g}})\right]\right\|_r\notag\\
		&\quad +\left\|\pa_x^{-1}\left[\pa_x(\widetilde{\ov{f}-\ov{g}}) \widetilde{\ov{g}}\right]-\pa_x^{-2}\left[\pa_x(\widetilde{\ov{f}-\ov{g}}) \pa_x\widetilde{\ov{g}}\right]\right\|_r\Big)\notag\\
		&\lesssim \tau (\|f\|_r+\|g\|_r)\|f-g\|_r,
	\end{align}
	where we have used the modified version of Newton-Leibniz formula \begin{align*}
		\pa_x^{-2}[(\pa_x^2f)g]=\pa_x^{-1}[(\pa_xf)g]-\pa_x^{-2}[(\pa_xf)( \pa_xg)],
	\end{align*}
	which can be obviously derived by the decomposition
	\[k^{-2}k_1^2=k^{-2}k_1(k-k_2)=k^{-1}k_1-k^{-2}k_1k_2.\]
Recalling the definition of $P_2^{\tau}(f)$ in \eqref{t3}, we can establish
	\begin{align}\label{stability6}
		\|P_2^{\tau}(f)-P_2^{\tau}(g)\|_r\lesssim \tau (\|f\|_r+\|g\|_r)\|f-g\|_r,
	\end{align}
by employing similar arguments as above.
	Combining \eqref{stability1}--\eqref{stability6}, applying Lemma \ref{lemma1} and Lemma \ref{lemma3}, for $r\ge 1$, we have
	\begin{align}\label{stability7}
		\|&\Psi_1^{\tau}(f)-\Psi_1^{\tau} (g)\|_r=\bigg\| {\rm e}^{i\tau \la \pa_x^2\ra}(f-g)-\frac{i}{4}B^{\tau}\big[W^{\tau}(f)-W^{\tau}(g)+I_{1}^{\tau}(f)-I_{1}^{\tau}(g)\notag\\
		&\qquad+2\left(I_{2}^{\tau}(f)-I_{2}^{\tau}(g)\right)\big]-i\tau (at_n+b)B^{\tau}\big(f-g +\psi_1(2i\tau \partial_x^2)(\overline{f}-\ov{g})\big)  \bigg\|_r\notag\\
		&\le \|f-g\|_r+\Big[\big\|W^{\tau}(f)-W^{\tau}(g)\big\|_r+\left\|I_{1}^{\tau}(f)-I_{1}^{\tau}(g)\right\|_r\notag\\
&\quad+2\left\|I_{2}^{\tau}(f)-I_{2}^{\tau}(g)\right\|_r\Big]+C_r\tau(\|f-g\|_r+\|\ov{f}-\ov{g}\|_r)\notag\\
		&\le (1+M\tau)\lVert f-g\rVert_r.
	\end{align}
	where $M$ depends on $r$ and $\|f\|_r+\|g\|_r$ and the proof is complete.
\end{proof}

\subsection{Proof of Theorem \ref{1orderth1}}
\begin{proof}
In spirit of \eqref{firstsch}, it suffices to show
\[\|u(t_n)-u^n\|_r\le C\tau.\]
Combining the local error estimate in Lemma \ref{localerror} and stability inequality \eqref{stab1}, we are led to
\begin{align*}
			\lVert u(t_{n+1})-u^{n+1} \rVert_r&\leq \lVert u(t_{n+1})-\Psi_1^{\tau}(u(t_n)) \rVert_r+\lVert \Psi_1^{\tau}(u(t_n))-\Psi_1^{\tau}(u^{n}) \rVert_r\\
			&\leq L \tau^{2}+\lVert \Psi_1^{\tau}(u(t_n))-\Psi_1^{\tau}(u^{n}) \rVert_r\\
			&\leq  L\tau^{2}+{\rm e}^{\tau M}\lVert u(t_{n})-u^{n} \rVert_r,
		\end{align*}
where $L$ depends on $\|u\|_{L^\infty(0, T; H^{r+p(r)})}$ (or equivalently $\|z\|_{L^\infty(0, T; H^{r+p(r)})}+\|z_t\|_{L^\infty(0, T; H^{r+p(r)-2})}$), and $M$ depends on $\|u(t_n)\|_r$ and $\|u^n\|_r$. Then the assertion follows by a standard induction argument \cite{Ostermann2019,Ostermann2018,ostermann2019two}.
\end{proof}

\section{Numerical experiments}\label{num}
In this section, we present some numerical experiments of the newly proposed first-order LREI $\Psi_1^{\tau}$ to justify our theoretical convergence results. The numerical investigations of convergence of the first-order LREIs in \cite{li2022lowregularity,ostermann2019two} will be provided as comparisons. Furthermore, the Fourier pseudospectral method is used for spatial discretization so that each iteration can be calculated by FFT via $O(M{\rm log}M)$ operations, where $M$ represents the number of grid points in space. We choose the spatial mesh size $\Delta x=1/2^6$ for soliton solutions and $\Delta x=\pi/2^{15}$ for rough solutions. Moreover, we define the error in $H^r$ as $\|z(t_n)-z^n\|_r +\|\pa_t z(t_n)-z_t^n\|_{r-2}$.

\subsection{Soliton solutions}
In the first experiment, we numerically verify the temporal convergence of the first-order LREI $\Psi_1^{\tau}$ \eqref{firstsch} for the soliton solution \cite{manoranjan1984numerical} of \eqref{GB}
\begin{align}\label{solitonsolu}
	z(x,t)=-A {\rm sech}^2[(\omega/2)(x-vt+\zeta_0)],
\end{align}
where $\zeta_0\in\mathbb{R}$, $0<\omega\leq 1$, and the relations among the amplitude $A$, velocity $v$ and frequency $\omega$ are given as follows
\begin{align*}
	A=3\omega^2/2, \quad v=\pm(1-\omega^2)^{1/2}.
\end{align*}
It can be clearly observed that \eqref{solitonsolu} decays exponentially at far field, which enables us to impose the periodic boundary conditions on a bounded domain $[-x_0, x_0]$ when $x_0$ is chosen large enough. Fig. \ref{solitonsol} shows the error in $H^2$, for the numerical solution obtained by \eqref{firstsch} at $t_n=1$, where $x_0=80$. From Fig. \ref{solitonsol} we can see that the scheme converges at the first order in time.

\begin{figure}[htbp]
	\center
	\hspace{-130pt}
	\subfigure{
		\begin{minipage}[c]{0.4\linewidth}
			\centering
			\includegraphics[height=5.2cm, width=2\linewidth]{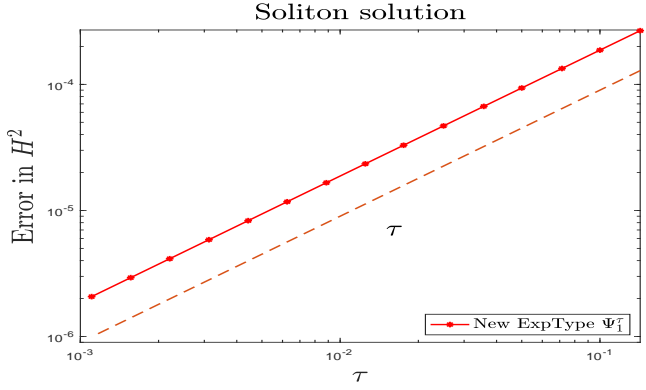}
		\end{minipage}
	}
	\caption{Linear convergence of the LREI scheme $\Psi_1^{\tau}$ for the soliton solution with $\zeta_0=0$, $\omega=1/2$,  $v=\sqrt{3}/2$. Here we select the spatial mesh size as $\Delta x=1/2^6$. } \label{solitonsol}
\end{figure}

\subsection{Rough solutions}
In the second experiment, we apply the first-order LREI $\Psi_1^{\tau}$ \eqref{firstsch} to \eqref{GB} under nonsmooth initial data. We numerically compare the results with those of the first-order LREIs in \cite{li2022lowregularity} and \cite{ostermann2019two}.

Following the construction method of the initial data with desired regularity in \cite{ostermann2019two}, we choose the spatial mesh size $\Delta x=2\pi /M$ with $M=2^{16}$ and the grid points $x_j=-\pi+j\Delta x$, $0\leq j<M$. Moreover, a uniformly distributed random vector rand($1,M$) can be taken in the computer, which is denoted by $Z=(z_0, \dots, z_{M-1})=\mathrm{rand}(1, M)$. Then we define the inverse derivative operator $\lv\partial_{x, M}\rv^{-\theta}$ as a truncation of the operator $\lv\partial_{x}\rv^{-\theta}$ \eqref{def1}, which maps a function $f\in L^2(\mathbb{T})$ to $H^{\theta}(\mathbb{T})$
\[\lv\partial_{x, M}\rv^{-\theta}f=\sum\limits_{k=-M/2, k\neq 0}^{M/2-1}\lv k\rv^{-\theta} \widehat{f}_k {\rm e}^{ikx}, \quad \theta\in\mathbb{R}.\]
Then we define
\[Z_0=\frac{Z_1+c*\|Z_1\|_{\infty}}{\|Z_1+c*\|Z_1\|_{\infty}\|_{}},\]
where $c:={\rm rand}(1)$ is a random number and $Z_1:=\lv\partial_{x, M}\rv^{-\theta} Z$, $x\in \mathbb{T}$. Finally, we get $Z_0\in H^{\theta}(\mathbb{T})$. For instance,  Fig. \ref{initia} displays the initial data obtained as above for $\theta=2$ and $\theta=2.5$, respectively.

\begin{figure}[htbp]
	\center
	\hspace{-47pt}
	\subfigure{
		\begin{minipage}[c]{0.4\linewidth}
			\centering
			\includegraphics[height=5.2cm, width=1.4\linewidth]{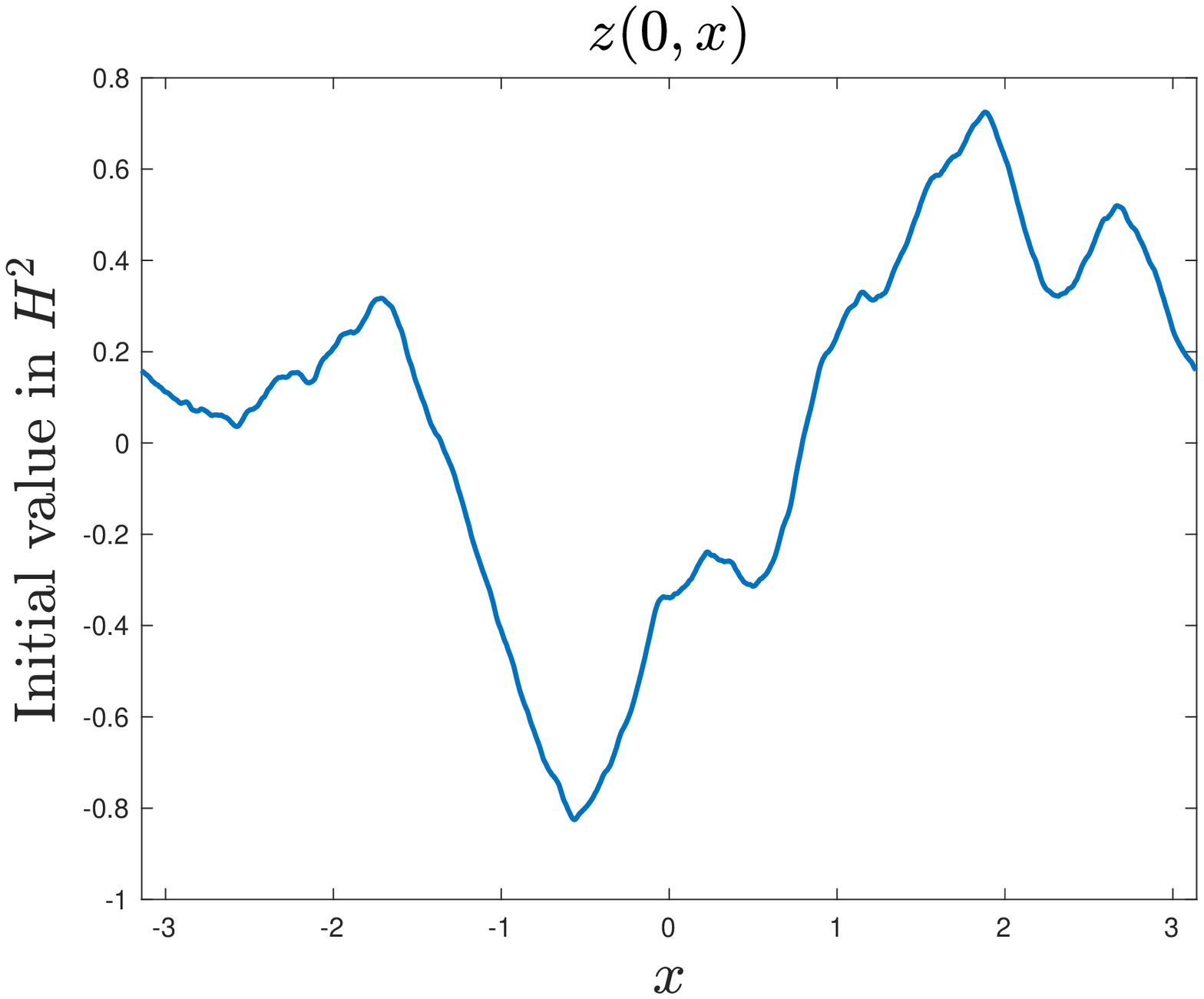}
		\end{minipage}
	}\hspace{27.7pt}
	\subfigure{
		\begin{minipage}[c]{0.4\linewidth}
			\centering
			\includegraphics[height=5.2cm, width=1.4\linewidth]{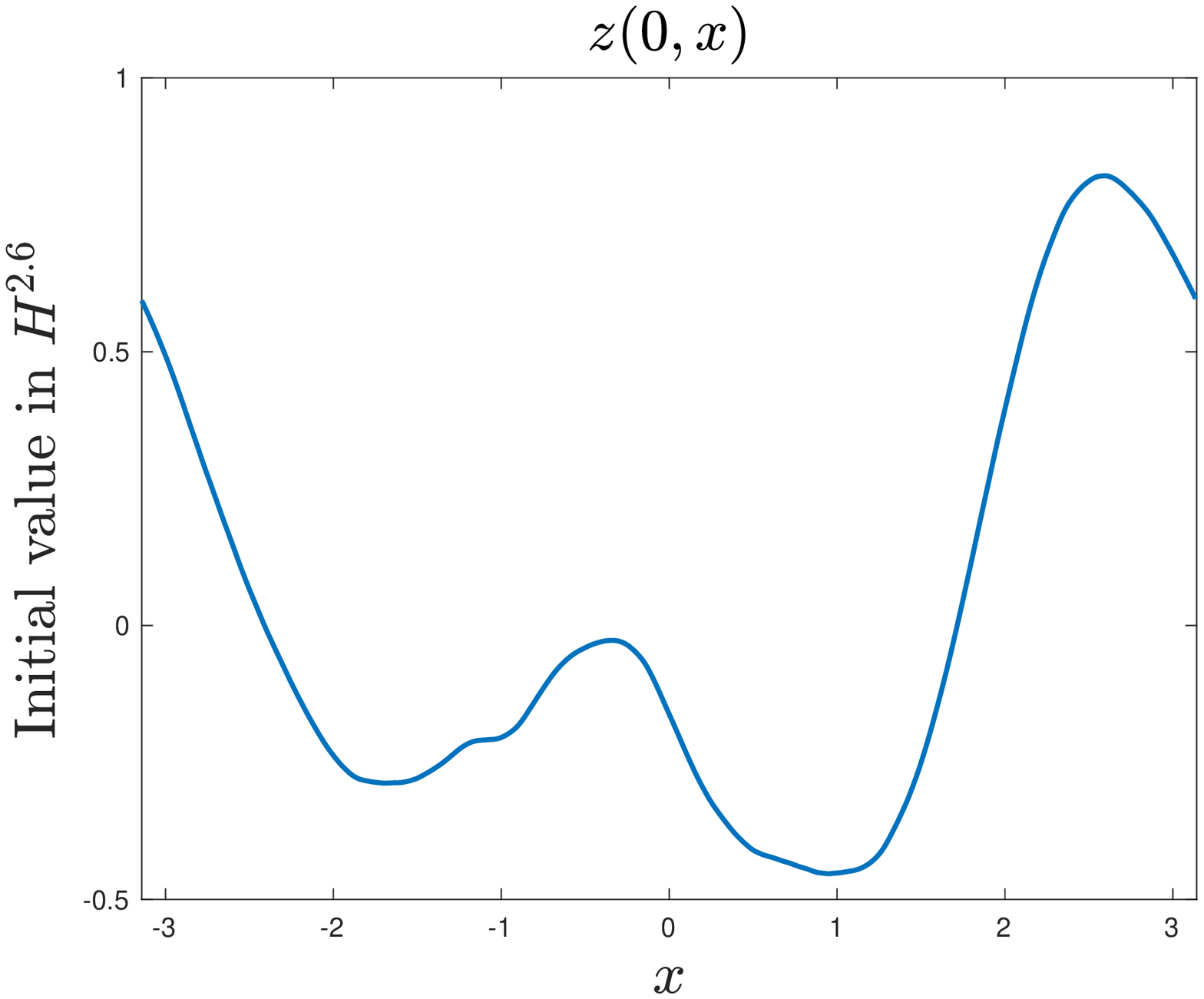}
		\end{minipage}
	}
	\caption{Left: initial value $z_0$ $\in$ $H^{2}$ with $\theta=2$. Right: initial value $z(0, x)$ $\in$ $H^{2.5}$ with $\theta=2.5$.} \label{initia}
\end{figure}

Figs. \ref{5252}-\ref{3254} show the errors of the scheme $\Psi_1^{\tau}$  and those in \cite{li2022lowregularity,ostermann2019two} in $H^r$ with various $r$ at the final time $t_n=T=1$ for different rough initial data, where the reference solution is obtained  by the first-order LREI $\Psi_1^{\tau}$ \eqref{firstsch} with a tiny time step $\tau=3\times10^{-5}$. Specifically, Figs. \ref{5252}--\ref{3254} display the temporal errors of the three schemes when the given initial data has additional order of regularity $0$, $1/4$, $1/2$, $2/3$ and $1$, respectively.
From the numerical results shown in Figs. \ref{5252} to \ref{3254}, it can be clearly observed that:
\begin{enumerate}[ (1)]
		\item The newly developed first-order LREI scheme in \eqref{firstsch} has first-order convergence in all cases, which demonstrates the theoretical results presented in Theorem \ref{1orderth1}.

		\item Compared to the other two schemes in \cite{li2022lowregularity,ostermann2019two}, the newly proposed scheme behaves most regularly and the oscillations are the weakest while the method in \cite{ostermann2019two} behaves most irregularly and might suffer an order reduction (cf. Fig. \ref{5252}). Furthermore, the method presented in this paper is the most accurate when the time step is small enough. This shows the superiority of the newly proposed method \eqref{sch0}.
	\end{enumerate}

\begin{figure}[htbp]
	\center
	\hspace{-47pt}
	\subfigure{
		\begin{minipage}[c]{0.4\linewidth}
			\centering
			\includegraphics[height=5.2cm, width=1.4\linewidth]{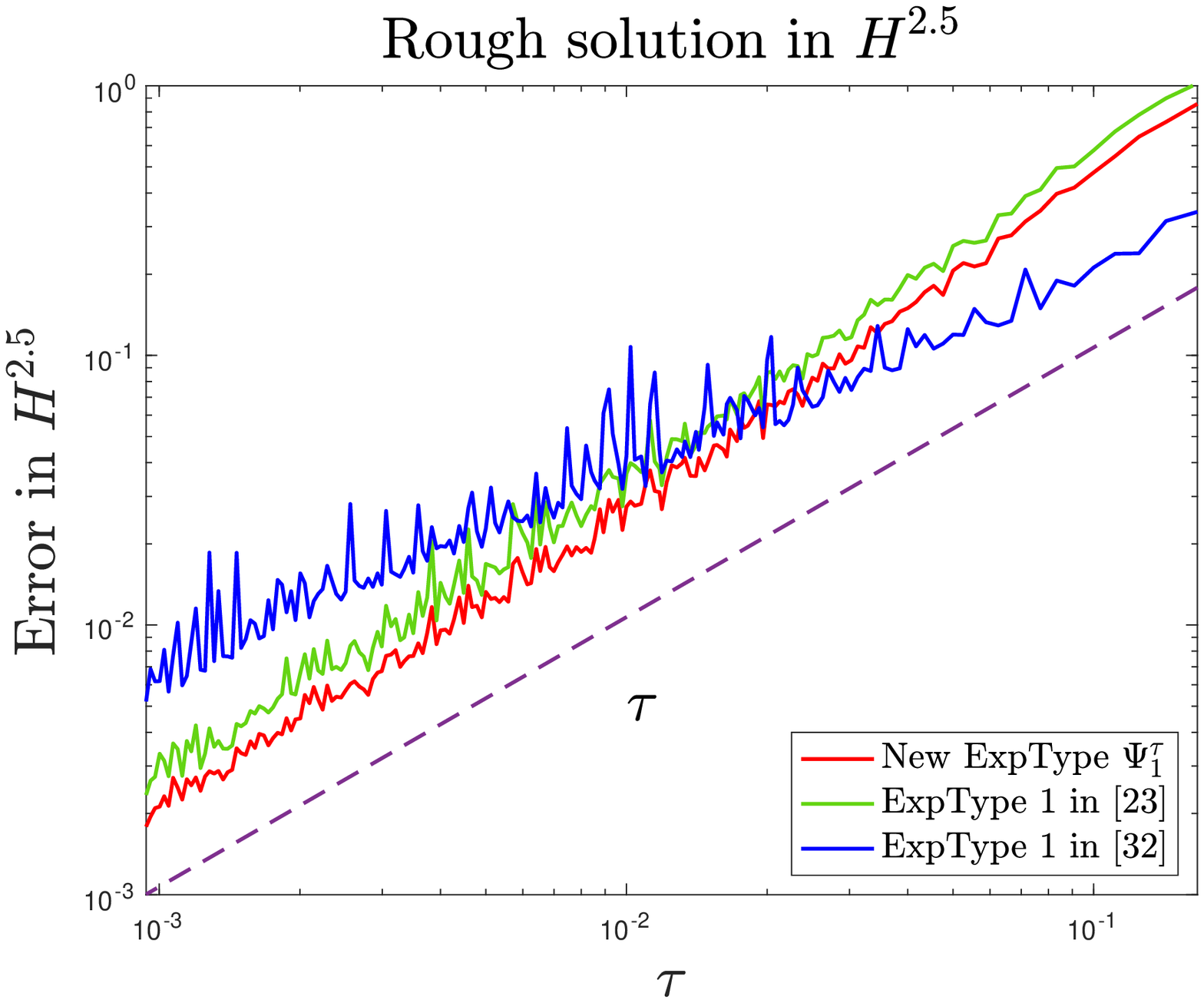}
		\end{minipage}
	}\hspace{27.7pt}
	\subfigure{
		\begin{minipage}[c]{0.4\linewidth}
			\centering
			\includegraphics[height=5.2cm, width=1.4\linewidth]{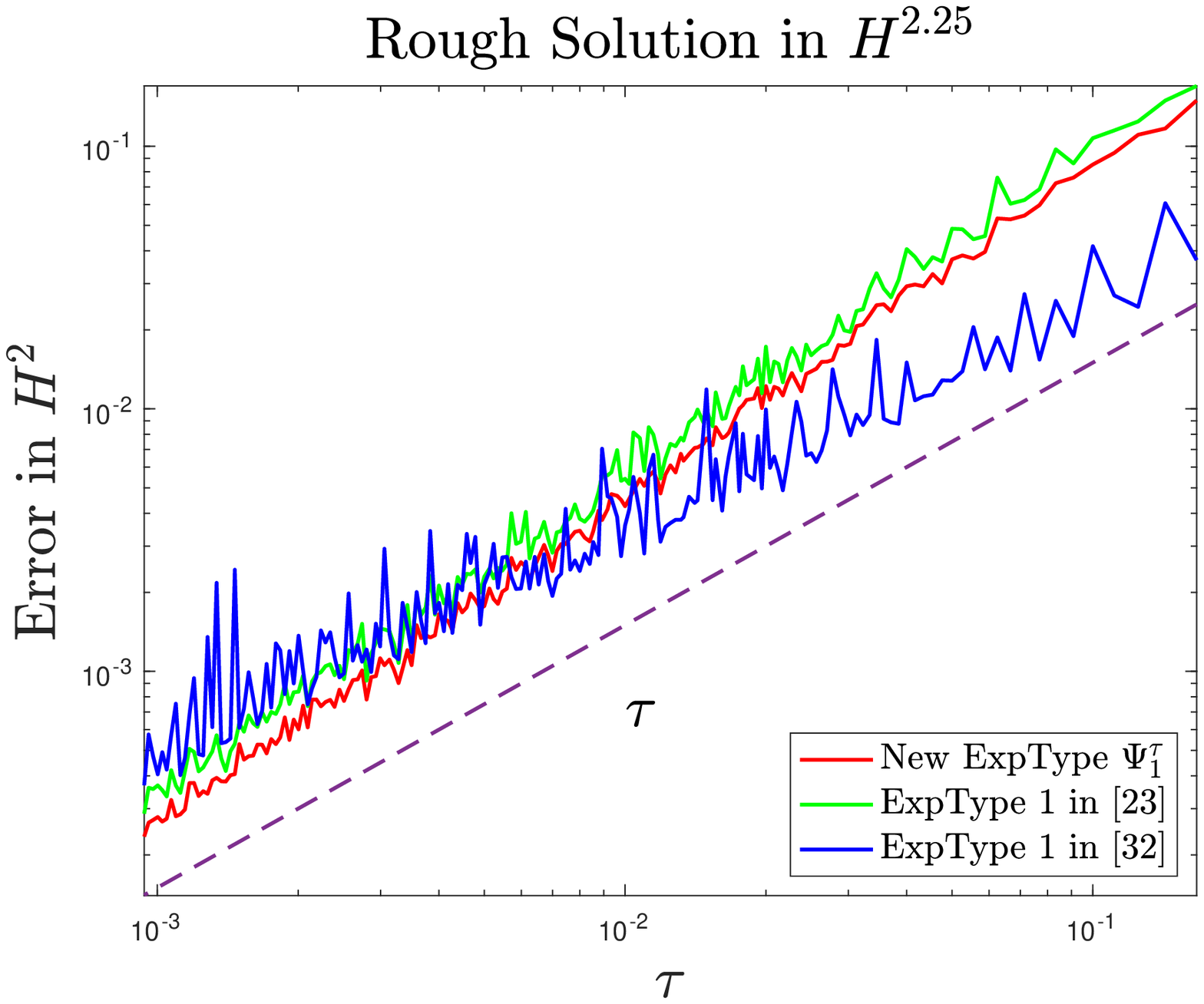}
		\end{minipage}
	}
\vspace{-2mm}
	\caption{Numerical errors in $H^{2.5}$ (left) and $H^{2}$ (right) of three first-order schemes at the final time $T=1$ with rough solution in $H^{2.5}$ and $H^{2.25}$, respectively.} \label{5252}
\end{figure}

\begin{figure}[htbp]
	\center
	\hspace{-47pt}
	\subfigure{
		\begin{minipage}[c]{0.4\linewidth}
			\centering
			\includegraphics[height=5.2cm, width=1.4\linewidth]{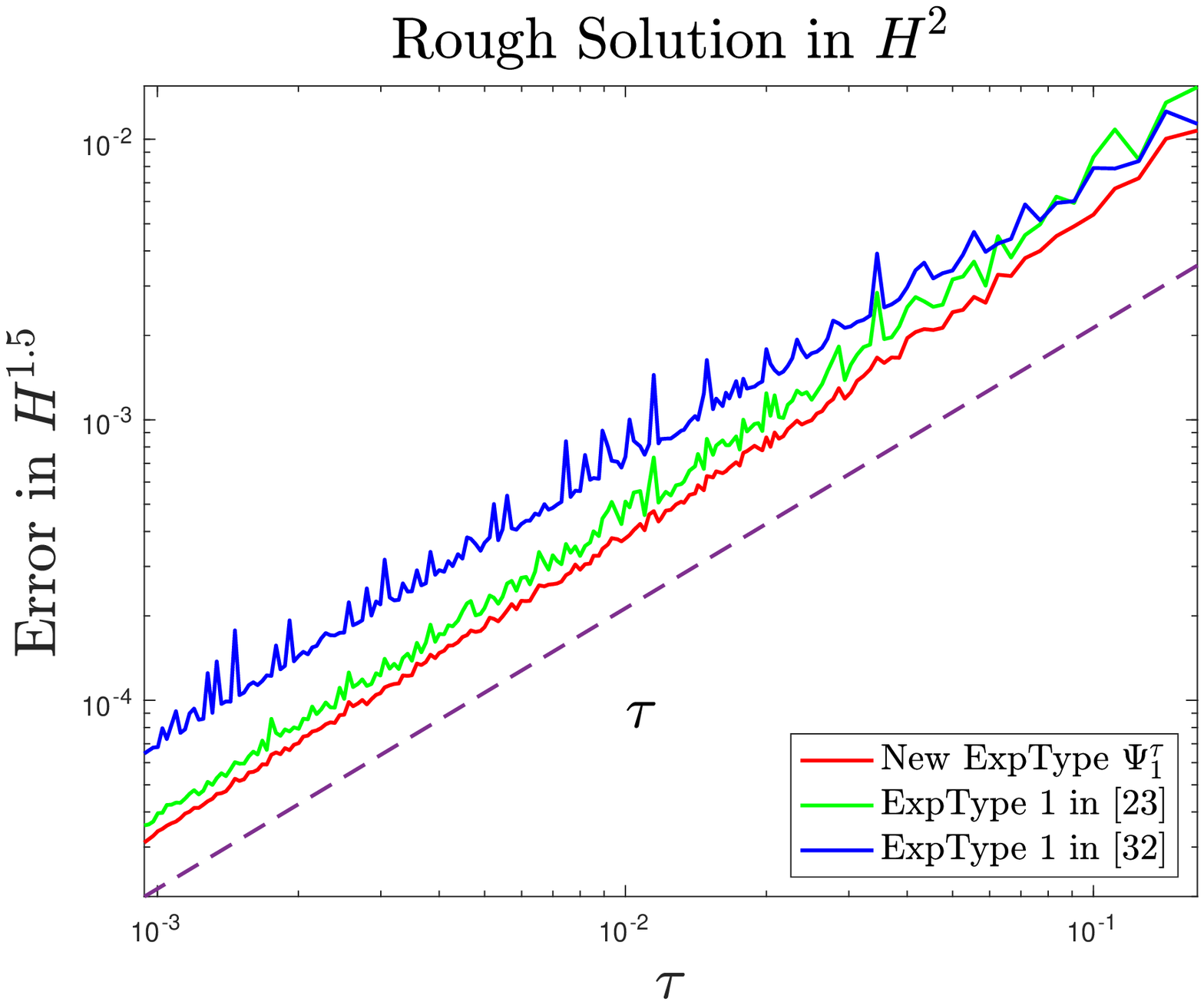}
		\end{minipage}
	}\hspace{27.7pt}
	\subfigure{
		\begin{minipage}[c]{0.4\linewidth}
			\centering
			\includegraphics[height=5.2cm, width=1.4\linewidth]{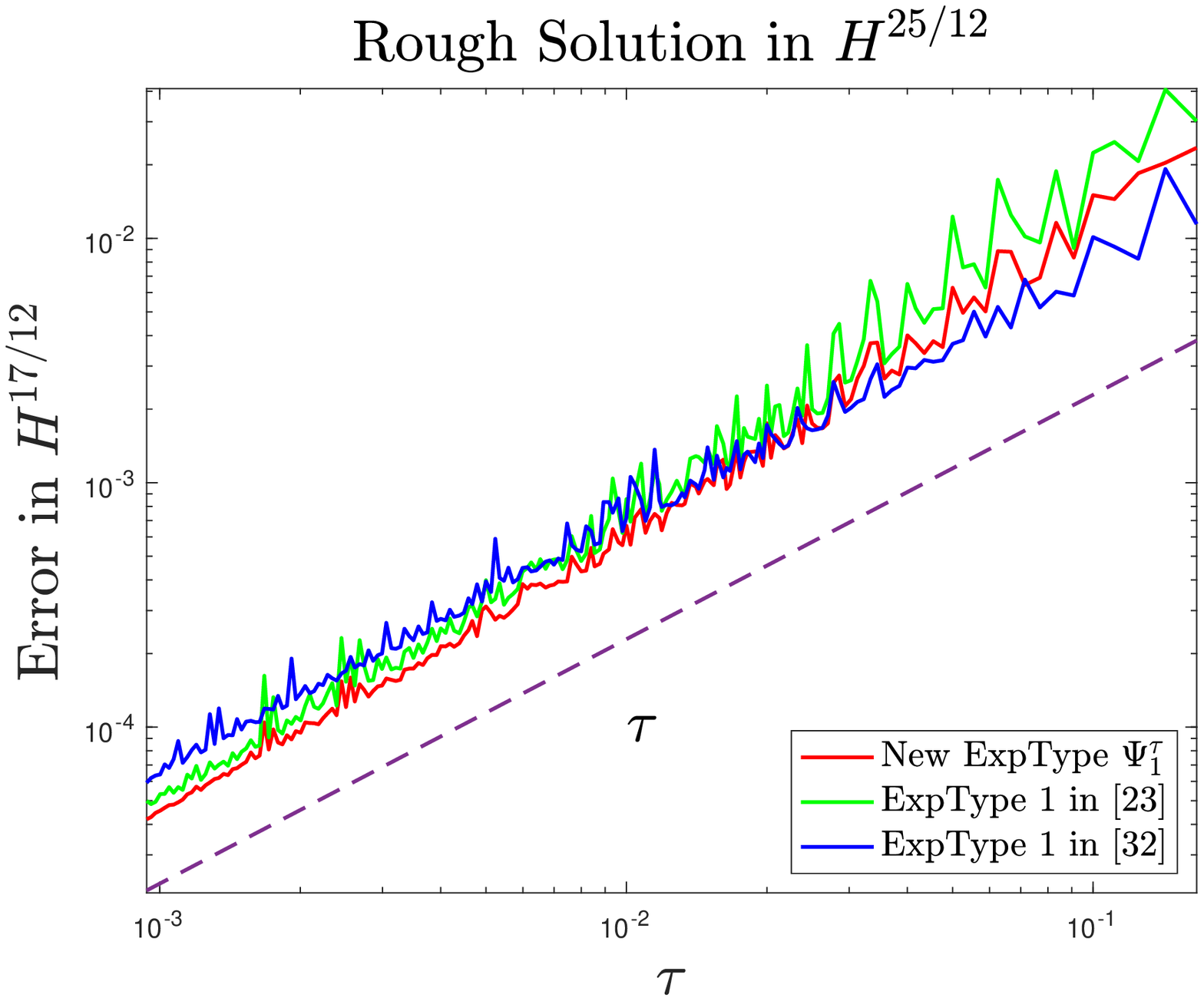}
		\end{minipage}
	}
\vspace{-2mm}
	\caption{Numerical errors in $H^{1.5}$ and $H^{17/12}$ of three first-order schemes at the final time $T=1$ with rough solution in $H^{2}$ and $H^{25/12}$, respectively.} \label{322}
\end{figure}

\begin{figure}[htbp]
	\center
	\hspace{-47pt}
	\subfigure{
		\begin{minipage}[c]{0.4\linewidth}
			\centering
			\includegraphics[height=5.2cm, width=1.4\linewidth]{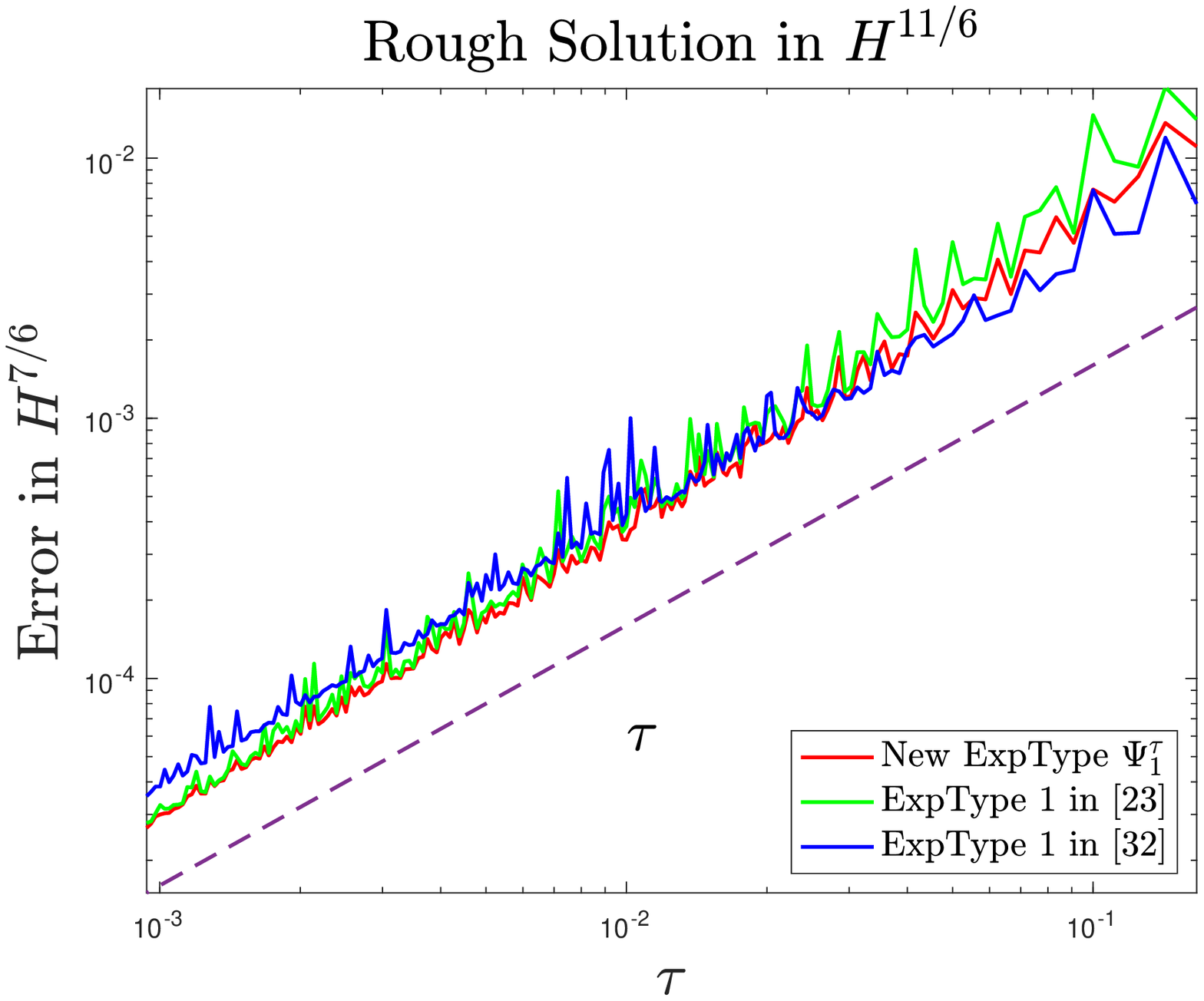}
		\end{minipage}
	}\hspace{27.7pt}
	\subfigure{
		\begin{minipage}[c]{0.4\linewidth}
			\centering
			\includegraphics[height=5.2cm, width=1.4\linewidth]{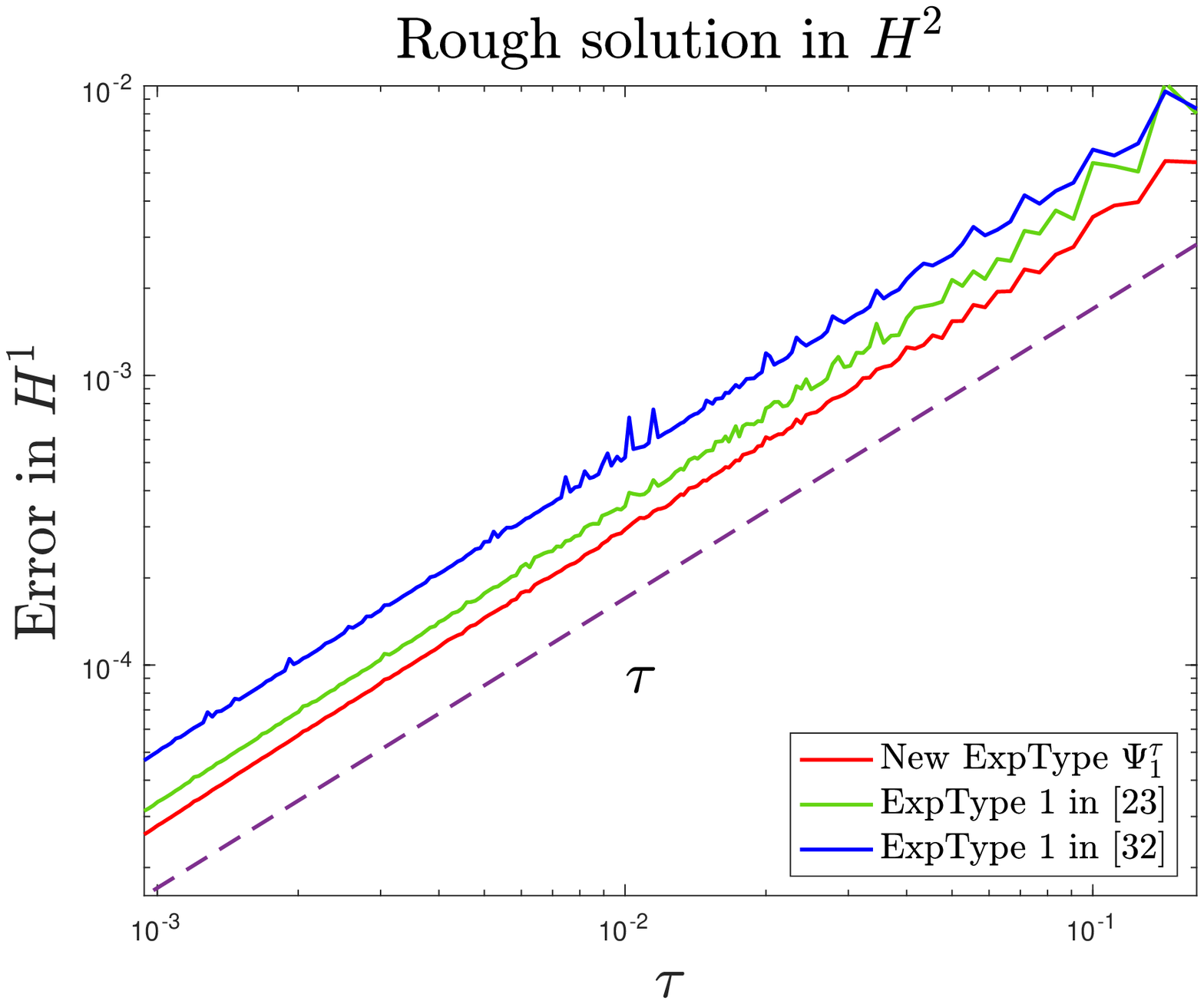}
		\end{minipage}
	}
\vspace{-2mm}
	\caption{Numerical errors in $H^{7/6}$ and $H^1$ of the three first-order schemes at the final time $T=1$ with rough solution in $H^{11/6}$ and $H^{2}$, respectively.} \label{3254}
\end{figure}

\section{Conclusions}\label{conclu}
 In this work, we developed a new first-order low regularity exponential-type integrator for the ``good" Boussinesq equation with rough initial data. The method is based on a twisted variable and the phase space analysis of the nonlinear dynamics. By applying the Kato-Ponce inequalities, the Hardy-Littlewood-Sobolev type inequality and Sobolev embedding theorem, we established the linear convergence in $H^r$ with solutions in $H^{r+p(r)}$ for $r\ge 1$, where $p(r)$ is non-increasing with respect to $r$. Particularly, the first-order accuracy can be achieved in $H^r$ for solutions in $H^r$ when $r\ge 5/2$. This is the lowest regularity requirement of the existing methods for the GB equation so far. The analytical result is supported by extensive numerical experiments.

\begin{acknowledgements}
This work was supported by the NSFC 12201342.
\end{acknowledgements}
\textbf{Data Availability} Data sharing not applicable to this article as no datasets were generated or analyzed during the current study.
%
\section*{Declarations}
\textbf{Conflict of interest} The author declares no conflict of interest.



%
%

\end{document}